\setlist[itemize]{noitemsep,topsep=0pt}
\let\svthefootnote\thefootnote
\newcommand\blankfootnote[1]{%
\let\thefootnote\relax\footnotetext{#1}%
\let\thefootnote\svthefootnote%
}
\theoremstyle{plain}
\newtheorem{theorem}{Theorem}[section]
\newtheorem{lemma}[theorem]{Lemma}
\newtheorem{corollary}[theorem]{Corollary}
\newtheorem{observation}[theorem]{Observation}
\theoremstyle{definition}
\newtheorem{remark}[theorem]{Remark}
\DeclareMathAlphabet{\mathbbmsl}{U}{bbm}{m}{sl}
\DeclareMathAlphabet{\mathpzc}{OT1}{pzc}{m}{it}
\DeclareMathAlphabet{\mathsfit}{T1}{\sfdefault}{\mddefault}{\sldefault}\SetMathAlphabet{\mathsfit}{bold}{T1}{\sfdefault}{\bfdefault}{\sldefault}
\begin{document}

\title{\vspace{-1cm}{\textbf{Weak saturation numbers in random graphs}}\\[6mm]}

\author{
\hspace{-5mm}Olga  Kalinichenko$^{^{1, a}}$  \, \,    Meysam Miralaei$^{^{2, b}}$  \,   \,  Ali Mohammadian$^{^{3,  c, d}}$  \, \,    Behruz Tayfeh-Rezaie$^{^{2, c}}$\\[4mm]
$^{^1}$Moscow Institute of Physics and Technology, \\  Dolgoprudny 141700, Russia   \\[2mm]
$^{^2}$School of Mathematics,    Institute for Research in
Fundamental  Sciences (IPM),  \\  P.O. Box 19395-5746, Tehran, Iran \\[2mm]
$^{^3}$School of Mathematical Sciences,    Anhui University, \\
Hefei 230601,  Anhui,    China \\[2mm]
\hspace{-5mm}\href{mailto:kalinichenko.oi@phystech.edu}{kalinichenko.oi@phystech.edu}  \,  \,   \href{mailto:m.miralaei@ipm.ir}{m.miralaei@ipm.ir} \, \,     \href{mailto:ali\_m@ahu.edu.cn}{ali\_m@ahu.edu.cn} \,  \,   \href{mailto:tayfeh-r@ipm.ir}{tayfeh-r@ipm.ir}\\[6mm]}

\blankfootnote{\hspace*{-6mm}$^{^a}$Partially supported by RFBR under grant number	20-51-56017.\\
$^{^b}$Partially  supported by a grant from IPM.\\
$^{^c}$Partially  supported by Iran  National  Science Foundation   under project number  99003814.\\
$^{^d}$Partially  supported   by the    Natural Science Foundation of Anhui Province  with  grant identifier 2008085MA03 and by the National Natural Science Foundation of China with  grant number 12171002.}

\date{}

\maketitle

\sloppy

\begin{abstract}
For  two given graphs $G$ and $F$, a graph  $ H$ is said to be weakly $ (G, F) $-saturated
if $H$  is a spanning subgraph of $ G$ which has     no   copy of  $F$  as a   subgraph and one  can add all edges in $ E(G)\setminus E(H)$ to $ H$ in some order so  that a new copy of $F$  is created at each step.
The  weak saturation number   $ \mathrm{ wsat}(G, F)$ is the
minimum number of edges of  a weakly $(G, F)$-saturated graph.
In this paper, we deal with the relation between $ \mathrm{ wsat}(\mathbbmsl{G}(n,p), F)$ and $ \mathrm{wsat}(K_n, F)$, where $\mathbbmsl{G}(n,p)$ denotes the Erd\H{o}s--R\'enyi random graph and $ K_n$ denotes the complete graph on $ n$ vertices. For  every    graph $ F$ and  constant $ p$, we prove  that
$ \mathrm{ wsat}( \mathbbmsl{G}(n,p),F)= \mathrm{ wsat}(K_n,F)(1+o(1))$ with high probability.
Also,  for some graphs $ F$ including complete graphs, complete bipartite graphs, and  connected graphs with minimum degree $ 1$ or $ 2$,  it is shown that there exists  an $ \varepsilon(F)>0$ such that, for any $ p\geqslant n^{-\varepsilon(F)}\log n$,   $ \mathrm{ wsat}( \mathbbmsl{G}(n,p),F)= \mathrm{ wsat}(K_n,F)$  with high probability. \\[-1mm]

\noindent{\bf Keywords:}  Random graph,  Weak saturation number. \\[-1mm]

\noindent{\bf 2020 Mathematics Subject Classification:}    05C35, 05C80. \\[4mm]
\end{abstract}

\section{Introduction}

All graphs throughout this paper are assumed to be  finite, undirected, and without loops or multiple edges.  The vertex set of a graph $G$ is denoted by  $V(G)$ and the
edge set of   $G$ is denoted by  $E(G)$.
For two  given graphs $G$ and $F$, a graph $H$ is said to be   {\sl weakly $(G, F)$-saturated}  if $H$ is a spanning subgraph of $ G$ which has no copy of  $F$   as a  subgraph and  there is an ordering $e_1, e_2, \dots$  of edges in $E(G)\setminus E(H)$ such that for $i=1, 2, \ldots$ the spanning subgraph of $G$  with the edge set  $E(H)\cup\{e_1, \ldots, e_{i}\}$ has a  copy    of   $F$ containing   $e_i$.
The minimum number of edges in a weakly  $(G, F)$-saturated graph is called the {\sl weak saturation number}  of $F$ in $G$ and  is denoted by $ \mathrm{ wsat}(G, F)$.
Let $ K_n $ be the complete   graph on   $n$ vertices and $ K_{s,t} $ be the complete bipartite graph with parts of sizes $ s $ and $ t $.
For the purpose of simplification,
a     weakly $(K_n, F)$-saturated graph  is called      weakly $F$-saturated if  there is no danger of ambiguity and moreover,
$\mathrm{ wsat}(K_n, F)$ is written as   $ \mathrm{ wsat}(n, F)$.

The notion of  weak saturation  was  initially introduced by Bollob\'{a}s \cite{Bella} in 1968.
Weak saturation is closely related
to the so-called  ‘graph bootstrap percolation’   which  was    introduced for the first time in  \cite{Balogh}.
It is worth mentioning that the study of any  extremal parameter    is an important task in    graph theory and usually receives  a great deal of  attention.
Determining the exact value of  $ \mathrm{ wsat}(n, F)$   for a given graph $F$ is  often  quite difficult.
Although  the  weak  saturation number has   been   studied for a long time, related   literature is still poor.
Lov\'{a}sz \cite{Lovasz} established  that
$$\mathrm{ wsat}(n, K_s)=(s-2)n-\binom{s-1}{2}$$
when    $n\geqslant s\geqslant2$,    settling a conjecture of Bollob\'{a}s  \cite{Bella}.
Kalai \cite{Kalai2}   proved  that   $$\mathrm{wsat}(n, K_{t, t})=(t-1)n - \binom{t-1}{2}$$  if   $n\geqslant4t-4$.
An alternative proof for the  result is given by Kronenberg, Martins,  and Morrison   \cite{Kronenberg}  for every  $n\geqslant3t-3$. They also established  that,    for every   $t>s$ and sufficiently large $ n$,
$$(s - 1)(n - t + 1) + {t \choose 2} \leqslant \mathrm{wsat}(n, K_{s,t}) \leqslant (s - 1)(n - s) + {t \choose 2}.$$
Miralaei, Mohammadian, and Tayfeh-Rezaie \cite{MMT} determined the exact value of $ \mathrm{ wsat}(n, K_{2,t})$. They found that,    for every   $t \geqslant 3$ and $n \geqslant t + 2$,
\begin{eqnarray*}
\mathrm{ wsat}(n, K_{2,t})=\left\{
\begin{array}{ll}\vspace{-4.6mm}&\\
n - 1 + {t \choose 2} &  \quad   \text{if $ t$ is even and $ n\leqslant 2t-2$},\\ \vspace{-2mm} \\
n - 2 + {t \choose 2} & \quad   \text{otherwise}.\\\vspace{-4.6mm}&
\end{array}\right.
\end{eqnarray*}
For  more  results on  weak saturation and related topics, we refer to the survey \cite{Faud.3}.

Random analogues of different parameters in extremal graph theory have  been extensively studied in the literature. These studies often  reveal the behavior of extremal parameters for a typical graph.
Recall that  the Erd\H{o}s--R\'enyi random graph model $\mathbbmsl{G}(n, p)$ is the
probability space of all graphs on a fixed vertex set of size $n$ where every two  distinct    vertices  are adjacent   independently with probability $p$. Also,    recall that the  notion `with high probability'  is used whenever   an event  occurs in     $\mathbbmsl{G}(n, p)$  with a  probability approaching $1$ as $n$ goes to infinity.

The study of the
weak saturation problem in random graphs  was initiated by Kor\'andi and   Sudakov \cite{kor}. They   proved that,  for every constant $p\in(0, 1)$ and    integer  $s\geqslant3$,
$$\mathrm{wsat}\big(\mathbbmsl{G}(n, p), K_s\big)=\mathrm{wsat}(n, K_s)$$
with high probability.
We will sometimes use the notion  `stability' for the  graph $F$ if  $ \mathrm{ wsat}(\mathbbmsl{G}(n, p), F)= \mathrm{ wsat}(n, F)$ with high probability.
Bidgoli, Mohammadian, Tayfeh-Rezaie, and Zhukovskii \cite{BMTZ} established the existence of a threshold function for the property $ \mathrm{ wsat}(\mathbbmsl{G}(n, p), K_s)= \mathrm{ wsat}(n, K_s)$ and provided the following upper and lower bounds on the function for any $ s\geqslant 3$.

\begin{itemize}[leftmargin=*]
\item  There exists    a constant $c_s$ such that, if $p \leqslant c_s n^{-2/(s+1)}\log ^{2/((s-2)(s+1))}n$, then  $\mathrm{wsat}(\mathbbmsl{G}(n, p), K_s) \neq \mathrm{wsat}(n, K_s)$ with high probability.
\item  If $p\geqslant n^{-1/(2s-3)}\log^{(s-1)/(2s-3)} n$, then $\mathrm{wsat}(\mathbbmsl{G}(n, p), K_s) = \mathrm{wsat}(n, K_s)$ with high probability.
\end{itemize}

Borowiecki and  Sidorowicz \cite{Borowiecki} proved that  $$\mathrm{wsat}(n, K_{1,t})=\binom{t}{2}$$ provided    $n\geqslant t+1$.   A short proof of the  result is given in  \cite{FGJ}.
Kalinichenko and  Zhukovskii  \cite{Kalinichenko} investigated $\mathrm{wsat}(\mathbbmsl{G}(n, p), K_{1, t})$ and  provided the following bounds for $ t\geqslant 3$.

\begin{itemize}[leftmargin=*]
\item  There exists   a constant $c_t$ such that, if $n^{-2}\ll p\leqslant c_t n^{-1/(t-1)}\log ^{-(t-2)/(t-1)} n$,
then  $\mathrm{wsat}(\mathbbmsl{G}(n, p), K_{1, t}) \neq \mathrm{wsat}(n, K_{1, t})$ with high probability.
\item  There exists   a constant $d_t$ such that, if $p\geqslant d_t n^{-1/(t-1)}\log ^{-(t-2)/(t-1)} n$, then $\mathrm{wsat}(\mathbbmsl{G}(n, p), K_{1, t}) = \mathrm{wsat}(n, K_{1, t})$ with high probability.
\end{itemize}

For a graph  $G$ and  a  subset   $X$ of $V(G)$,  denote by $N_G(X)$ the set of vertices of $G$  which are  adjacent to all vertices in   $X$ and set  $ N_G[X]= X\cup N_G(X)$.
For the sake of convenience, $N_G(v_1, \ldots, v_k)$ is written instead of $N_G(\{v_1, \ldots, v_k\})$.
For a vertex $ v$ of $ G$, we define the   {\sl degree}  of $v$    as $|N_G(v)|$ and denote it by $ d_G(v)$.
The maximum and   minimum degrees of the  vertices of $ G$ are denoted by $ \mathnormal{\Delta}(G)$ and  $ \delta(G)$, respectively.
Furthermore, denote by  $e(\mathbbmsl{G}(n,p))$    the random variable counting the edges in  $ \mathbbmsl{G}(n,p) $.

Kalinichenko and  Zhukovskii  \cite{Kalinichenko}    studied  sufficient conditions  on  weakly $(K_n, F)$-saturated graphs  such  that the equality     $ \mathrm{ wsat}(\mathbbmsl{G}(n, p), F)= \mathrm{ wsat}(n, F)$ holds with high probability. They proved the following theorem.

\begin{theorem} [\cite{Kalinichenko}]\label{thm:zhukovskii}
Let $F$ be a graph with   $ \delta(F) \geqslant 1$. Also, let $p \in (0,1)$ and $c\geqslant\delta(F)-1$ be constants. For every positive integer $n$, assume that there exists a   weakly $(K_n, F)$-saturated graph $H_n$   containing a set of vertices $S_n$ with $|S_n| \leqslant c$ such that each  vertex from $V(K_n) \setminus S_n$ is adjacent to  at least $\delta(F) - 1$ vertices in  $S_n$.  Then,  there exists a   weakly $(\mathbbmsl{G}(n, p), F)$-saturated graph with  $\min\{e(\mathbbmsl{G}(n,p)), |E(H_n)|\}$ edges with high probability.
\end{theorem}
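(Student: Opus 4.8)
The plan is to recast the statement in the language of bootstrap percolation and then transplant the structure of $H_n$ into $\mathbbmsl{G}(n,p)$. Note first that $H'$ is weakly $(\mathbbmsl{G}(n,p),F)$-saturated precisely when $H'\subseteq\mathbbmsl{G}(n,p)$ is $F$-free and the $F$-bootstrap closure of $H'$ inside $\mathbbmsl{G}(n,p)$ (repeatedly adding an edge of $\mathbbmsl{G}(n,p)$ whenever it completes a copy of $F$) equals $\mathbbmsl{G}(n,p)$. Since any $H'\subseteq\mathbbmsl{G}(n,p)$ automatically has at most $e(\mathbbmsl{G}(n,p))$ edges, the minimum in the statement reduces to producing such an $H'$ with at most $|E(H_n)|$ edges. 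I would also record the deterministic fact that a weakly saturated graph must have minimum degree at least $\delta(F)-1$: when the first edge at a vertex $v$ is added it lies in a new copy of $F$, where $v$ maps to a vertex of degree at least $\delta(F)$, so $v$ needs $\delta(F)-1$ previously present neighbours. This is exactly the quantity appearing in the hypothesis.

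To build $H'$ I would mimic $H_n$ rather than copy it. First, embed the constant-size ``engine'' $H_n[S_n]$, together with a bounded portion of its neighbourhood, into a dense region of $\mathbbmsl{G}(n,p)$; because this is a fixed graph, with high probability $\mathbbmsl{G}(n,p)$ contains a copy of it, and moreover every constant set of vertices has $(1+o(1))p^{|S_n|}n=\Theta(n)$ common neighbours. Then, to the vertices outside the engine I would assign host-edges so that each such $v$ acquires $\delta(F)-1$ neighbours spanning an appropriate $(\delta(F)-1)$-vertex substructure of $F$ (a small clique in the $K_s$ case), using in total no more edges than $H_n$ spends on the vertices outside $S_n$. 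Since every vertex of $\mathbbmsl{G}(n,p)$ has $\approx pn$ neighbours and all fixed subgraphs are ubiquitous, such a choice exists for every $v$, keeping $|E(H')|\leqslant|E(H_n)|$; deleting one edge if necessary preserves $F$-freeness.

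The percolation then proceeds in two phases. In Phase~1 the engine together with the density ignites the process: the $\delta(F)-1$ pre-placed neighbours of a vertex $v$ play the role of the minimum-degree vertex of $F$, and the $\Theta(n)$ common neighbours guaranteed by $\mathbbmsl{G}(n,p)$ supply the remaining vertices of $F$ and the edges among them, so that adding one host-edge at $v$ creates a copy of $F$; iterating over the vertices infects a dense core. In Phase~2 I would amplify by density: once the infected graph contains a \emph{dominant} clique $C$ on $\Theta(\log n)$ vertices (a clique all of whose incident host-edges are infected), every host-edge $uv$ has, with high probability, at least $|V(F)|-2$ already-infected common neighbours inside $C$ that span a copy of $K_{|V(F)|}$ minus $uv$; adding $uv$ completes this clique and hence a copy of $F$ through $uv$, since $F\subseteq K_{|V(F)|}$. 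Thus the closure fills $\mathbbmsl{G}(n,p)$. The uniform inputs needed here (every pair has $\Theta(\log n)$ common neighbours in a fixed $\Theta(\log n)$-set, and every fixed subgraph appears in every large common neighbourhood) follow from Chernoff bounds and a union bound.

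The main obstacle is the loss of the global hub. In $K_n$ the set $S_n$ is joined to every vertex, and this universal reach is precisely what drives the percolation of $H_n$; in $\mathbbmsl{G}(n,p)$ no constant-size set is adjacent to more than a constant fraction of the vertices with $\delta(F)-1$ edges, so the hub's effect must be reconstructed indirectly through the ambient density and a carefully ordered, possibly multi-round, infection. This is entangled with a tight budget: for extremal $F$ the leading term of $|E(H_n)|$ coincides with $(\delta(F)-1)n$ but carries a \emph{negative} lower-order correction (such as $-\binom{s-1}{2}$ for $K_s$), so there is no slack in the leading coefficient and the lower-order terms must be accounted for exactly. Balancing ``enough pre-placed structure to seed Phase~1'' against ``few enough edges to stay within $|E(H_n)|$'' is where the argument is most delicate, and is exactly where the hypotheses $|S_n|\leqslant c$ and the $\delta(F)-1$ degree condition are used to keep the engine of bounded size while still furnishing each vertex with the minimum structure it needs to enter a copy of $F$.
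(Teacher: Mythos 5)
A preliminary remark: this paper never proves \Cref{thm:zhukovskii} at all --- it is quoted from \cite{Kalinichenko}; the nearest argument in the present paper is the proof of \Cref{thm:upbnd random}, which underlies \Cref{thm:Gnp=Kn} (a statement the authors present as a generalization of \Cref{thm:zhukovskii} with simpler arguments). Measured against that, your proposal has a genuine gap, and it sits exactly where you yourself place it: Phase~1 is circular. In a weak saturation process, every added edge must complete a copy of $F$ all of whose other edges are \emph{already present in the evolving graph}, not merely in the host $\mathbbmsl{G}(n,p)$. You let the $\Theta(n)$ common neighbours ``supply the remaining vertices of $F$ and the edges among them'', but those are host-edges, not infected ones: at the start of the process every vertex outside the constant-size engine carries only $\delta(F)-1$ infected edges, so outside the engine there is no copy of $F$ minus one edge anywhere, and the iteration never ignites. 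Relatedly, your $\delta(F)-1$ pre-placed edges at a vertex $v$ go to essentially arbitrary neighbours, whereas for the process ever to reach $v$ those neighbours must lie in a region that is fully infected \emph{before} $v$'s turn; arranging this forces the attachment sets to be chosen compatibly with a global infection order. This is precisely what the proof of \Cref{thm:upbnd random} engineers: a planted constant-size clique $\mathnormal{\Omega}$ carrying a weakly $(G[\mathnormal{\Omega}],F)$-saturated seed, vertices of $N_G(\mathnormal{\Omega})$ attached to $\delta(F)-1$ vertices of $\mathnormal{\Omega}$, every remaining vertex $v$ attached to the first $\delta(F)-1$ vertices of the $(s-2)$-th power of a Hamiltonian path inside $N_G(\{v\}\cup\mathnormal{\Omega})$ (\Cref{HAM}, with $s=|V(F)|$), and a percolation run in a precise order (fill $\mathnormal{\Omega}$; then $E_G(\mathnormal{\Omega},N_G(\mathnormal{\Omega}))$; then $G[N_G(\mathnormal{\Omega})]$; then sweep each $v$ along its path). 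You name the obstacle --- the loss of the universal hub --- and explicitly defer its resolution as ``the most delicate'' part; that deferral is the missing proof.

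There is also a quantitative flaw in Phase~2, independent of Phase~1. To survive a union bound over the $\Theta(n^2)$ host edges, your dominant clique $C$ must satisfy $(1-p^2)^{|C|}=o(n^{-2})$, i.e.\ $|C|\geqslant(2+o(1))\log n/\log\tfrac{1}{1-p^2}$, while the maximum clique of $\mathbbmsl{G}(n,p)$ has size $(2+o(1))\log n/\log(1/p)$; for every constant $p<(\sqrt{5}-1)/2$ the first quantity exceeds the second, so the structure you propose to percolate toward does not exist in the host graph, let alone become fully infected. (The standard remedy, used in this paper, is to infect an entire common neighbourhood $N_G(\mathnormal{\Omega})$ of linear size rather than a logarithmic clique.) Finally, a smaller point: exhibiting a weakly saturated graph with \emph{at most} $|E(H_n)|$ edges is not the same as exhibiting one with exactly $\min\{e(\mathbbmsl{G}(n,p)),|E(H_n)|\}$ edges, and a weakly saturated graph cannot in general be padded with further host edges without destroying $F$-freeness, so the construction must be engineered to hit the edge count exactly, as the constructions in \cite{Kalinichenko} and in \Cref{thm:upbnd random} do.
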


The following corollary  immediately follows from \Cref{thm:zhukovskii} and a result due to Spencer \cite{Spencer}.

\begin{corollary}[\cite{Kalinichenko}]\label{corol_transfer}
Let $F$ be a graph with   $ \delta(F) \geqslant 1$ and let
$p\in(0,1)$ be constant.
For every positive integer $n$, assume that  there  exists     a  weakly $(K_n, F)$-saturated graph $H_n$  with $|E(H_n)|= \mathrm{wsat}(n, F) $   satisfying the property described in \Cref{thm:zhukovskii}. Then,
$\mathrm{wsat}(\mathbbmsl{G}(n, p), F) = \mathrm{wsat}(n, F)$ with high probability.
\end{corollary}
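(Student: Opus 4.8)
The plan is to establish the two inequalities $\mathrm{wsat}(\mathbbmsl{G}(n,p),F)\leqslant \mathrm{wsat}(n,F)$ and $\mathrm{wsat}(\mathbbmsl{G}(n,p),F)\geqslant \mathrm{wsat}(n,F)$, each with high probability; together they give the asserted equality. For the upper bound I would feed the hypothesized graph $H_n$ into \Cref{thm:zhukovskii}. Since $H_n$ is weakly $(K_n,F)$-saturated with $|E(H_n)|=\mathrm{wsat}(n,F)$ and satisfies the stated structural property, the theorem produces, with high probability, a weakly $(\mathbbmsl{G}(n,p),F)$-saturated graph with $\min\{e(\mathbbmsl{G}(n,p)),\mathrm{wsat}(n,F)\}$ edges. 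It then remains to check that this minimum equals $\mathrm{wsat}(n,F)$ with high probability, i.e.\ that $e(\mathbbmsl{G}(n,p))\geqslant \mathrm{wsat}(n,F)$. As $F$ is fixed we have the standard linear bound $\mathrm{wsat}(n,F)=O(n)$, whereas for constant $p$ the variable $e(\mathbbmsl{G}(n,p))$ concentrates around $p\binom{n}{2}=\Theta(n^2)$ by a Chernoff estimate; hence $e(\mathbbmsl{G}(n,p))\geqslant \mathrm{wsat}(n,F)$ with high probability, and the upper bound follows.

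For the lower bound I would pass to the closure formulation of the weak saturation process. For a host graph $G$ and a subgraph $A\subseteq G$, let $\langle A\rangle_G$ denote the graph obtained from $A$ by repeatedly adjoining any edge of $G$ whose addition completes a new copy of $F$; this operator is monotone and idempotent. If $H$ is weakly $(\mathbbmsl{G}(n,p),F)$-saturated then $\langle H\rangle_{\mathbbmsl{G}(n,p)}=\mathbbmsl{G}(n,p)$, and because every $F$-completing step valid inside $\mathbbmsl{G}(n,p)$ remains valid inside $K_n$, the same sequence of additions shows $\mathbbmsl{G}(n,p)\subseteq \langle H\rangle_{K_n}$, whence $\langle \mathbbmsl{G}(n,p)\rangle_{K_n}\subseteq \langle\langle H\rangle_{K_n}\rangle_{K_n}=\langle H\rangle_{K_n}$ by monotonicity and idempotence. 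This is the point at which the result of Spencer enters: for constant $p$ it guarantees that, with high probability, the $F$-bootstrap process started from $\mathbbmsl{G}(n,p)$ percolates, that is $\langle \mathbbmsl{G}(n,p)\rangle_{K_n}=K_n$. Combining the two displays yields $\langle H\rangle_{K_n}=K_n$, and since $H$ is $F$-free by definition, $H$ is in fact weakly $(K_n,F)$-saturated, so $|E(H)|\geqslant \mathrm{wsat}(n,F)$. Minimizing over $H$ gives $\mathrm{wsat}(\mathbbmsl{G}(n,p),F)\geqslant \mathrm{wsat}(n,F)$ with high probability.

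The main obstacle is concentrated entirely in the lower bound, and specifically in the percolation input: one cannot hope for the inequality $\mathrm{wsat}(G,F)\geqslant \mathrm{wsat}(K_n,F)$ to hold for an arbitrary spanning subgraph $G$ of $K_n$ (a disjoint union of two cliques already violates it), so the argument genuinely depends on $\mathbbmsl{G}(n,p)$ being dense enough to fill $K_n$ under the $F$-process. Once this is supplied by Spencer's result, the concatenation of the two bootstrap runs—first filling $\mathbbmsl{G}(n,p)$ from $H$, then filling $K_n$ from $\mathbbmsl{G}(n,p)$—is a purely formal consequence of the monotonicity and idempotence of the closure operator, and the matching inequalities yield the corollary with high probability.
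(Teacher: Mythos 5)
Your proposal is correct and follows essentially the same route as the paper: the upper bound is exactly the intended application of \Cref{thm:zhukovskii} combined with the observation that $e(\mathbbmsl{G}(n,p))=\Theta(n^2)\gg \mathrm{wsat}(n,F)=O(n)$ for constant $p$, and your lower bound via Spencer's result is precisely the argument the paper records as \Cref{lem:clique} and \Cref{cor:wsatwsat} (a weakly $(\mathbbmsl{G}(n,p),F)$-saturated graph is also weakly $(K_n,F)$-saturated because $\mathbbmsl{G}(n,p)$ itself percolates in $K_n$). Your closure-operator formalism is just a more explicit write-up of the paper's concatenation-of-processes step, so the two proofs coincide in substance.
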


It has been verified in  \cite{Kalinichenko}  that \Cref{corol_transfer} implies the stability for some graphs $F$ such as complete graphs and complete bipartite graphs.

In this paper, we continue to explore the relation between
$\mathrm{ wsat}(\mathbbmsl{G}(n, p), F)$ and $ \mathrm{ wsat}(n, F)$.  We find the asymptotic behavior  of
$\mathrm{ wsat}(\mathbbmsl{G}(n, p), F)$ and sometimes its exact value compared to $ \mathrm{ wsat}(n,F) $.
Regarding the asymptotic behavior, we prove the following theorem in \Cref{S.2}.

\begin{restatable}{theorem}{asympstab}\label{th:asymp_stab}
Let  $ F$ be a graph with   $ \delta(F) \geqslant 1$   and let $p\in (0,1) $ be  constant. Then, 	$ \mathrm{ wsat}( \mathbbmsl{G}(n,p),F)= \mathrm{ wsat}(n,F)(1+o(1))$ with high probability.
\end{restatable}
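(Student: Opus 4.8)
The plan is to prove the two inequalities $\mathrm{wsat}(\mathbbmsl{G}(n,p),F)\geq \mathrm{wsat}(n,F)$ and $\mathrm{wsat}(\mathbbmsl{G}(n,p),F)\leq (1+o(1))\,\mathrm{wsat}(n,F)$ separately, both with high probability. The first is in fact an equality, so essentially all of the difficulty lies in the second. For the lower bound, let $H$ be any weakly $(\mathbbmsl{G}(n,p),F)$-saturated graph; I would show that with high probability $H$ is already weakly $(K_n,F)$-saturated, whence $|E(H)|\geq \mathrm{wsat}(n,F)$. Indeed $H$ is a spanning $F$-free subgraph of $K_n$, and by hypothesis the edges of $E(\mathbbmsl{G}(n,p))\setminus E(H)$ can be added one by one so that each creates a copy of $F$. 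It then suffices to continue by adding the non-edges of $\mathbbmsl{G}(n,p)$ in any order: fixing an edge $xy\in E(F)$, the point is the event $\mathcal{E}$ that for every pair $u,v$ there is an embedding of $F-xy$ into $\mathbbmsl{G}(n,p)$ sending $x$ to $u$ and $y$ to $v$. Since copies of $F$ persist under edge additions, on $\mathcal{E}$ every non-edge $uv$ completes a copy of $F$, so $H$ percolates all the way up to $K_n$. For constant $p$ the expected number of such embeddings is $\Theta(n^{|V(F)|-2})$, so Janson's inequality and a union bound over the fewer than $n^2$ pairs give $\Pr[\mathcal{E}]\to 1$.

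For the upper bound I would split on $\delta(F)$. When $\delta(F)=1$ one can apply \Cref{thm:zhukovskii} with $c=\delta(F)-1=0$ and $S_n=\varnothing$: the domination condition is then vacuous, so from an optimal weakly $(K_n,F)$-saturated graph $H_n$ it produces a weakly $(\mathbbmsl{G}(n,p),F)$-saturated graph with $\min\{e(\mathbbmsl{G}(n,p)),\mathrm{wsat}(n,F)\}=\mathrm{wsat}(n,F)$ edges, giving even an exact equality. When $\delta(F)\geq 2$ I would instead transfer an optimal constant-size gadget into $\mathbbmsl{G}(n,p)$, which has the advantage of tracking the true value of $\mathrm{wsat}(n,F)$ rather than forcing the $(\delta(F)-1)n$ count that \Cref{thm:zhukovskii} would impose. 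Using the known fact that $w:=\lim_{m\to\infty}\mathrm{wsat}(m,F)/m$ exists (and $w>0$ here, since in any weakly $(K_n,F)$-saturated graph every vertex has degree at least $\delta(F)-1\geq 1$), fix a large $m$ with $\mathrm{wsat}(m,F)\leq (w+\varepsilon)m$ and an optimal weakly $(K_m,F)$-saturated graph $G^\star$. As $p$ and $m$ are constant, with high probability $\mathbbmsl{G}(n,p)$ has a $K_m$-factor $Q_1,\dots,Q_{n/m}$; I place a copy of $G^\star$ inside each $Q_j$ (possible since $Q_j$ is complete in $\mathbbmsl{G}(n,p)$) and add a connected backbone of $\lambda(F)-1$ edges between consecutive cliques, where $\lambda(F)$ is the edge-connectivity of $F$. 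The seed $H$ then has $\tfrac{n}{m}\,\mathrm{wsat}(m,F)+O(n/m)$ edges, which is $(w+o(1))n=(1+o(1))\,\mathrm{wsat}(n,F)$ after letting $m\to\infty$ and $\varepsilon\to 0$.

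It remains to check that $H$ is $F$-free and percolates $\mathbbmsl{G}(n,p)$. Percolation would run in two phases: first each $G^\star$ fills its own clique, so that $H\supseteq\bigcup_j K_{Q_j}$ (all internal edges being present in $\mathbbmsl{G}(n,p)$); then one adds every remaining edge of $\mathbbmsl{G}(n,p)$, each of which runs between two cliques. To complete a copy of $F$ through a new cross edge $uv$ one embeds the two sides of a minimum edge-cut of $F$ into the two cliques and routes its $\lambda(F)$ cut edges through the backbone edges together with $uv$; as further cross edges appear, the remaining ones are handled inductively.

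I expect this second phase to be the main obstacle: one must show that the clique blocks together with the sparse backbone bootstrap all $\Theta(n^2)$ cross edges of $\mathbbmsl{G}(n,p)$, uniformly over the order in which they are added and over the choice of cut, which is exactly the kind of spreading statement underlying \Cref{thm:zhukovskii} and will presumably need an analogous percolation lemma for $\mathbbmsl{G}(n,p)$. Two further points require care but should not change the count: keeping the seed $F$-free (arrangeable by choosing the backbone at low-degree gadget vertices, or by deleting $o(n)$ gadget edges and re-adding them during percolation), and the case of a disconnected $F$, where the disjoint gadgets must be linked so that $H$ does not already contain $F$. Combining the exact lower bound with this construction yields $\mathrm{wsat}(\mathbbmsl{G}(n,p),F)=(1+o(1))\,\mathrm{wsat}(n,F)$.
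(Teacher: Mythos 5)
Your lower bound is sound and is essentially the paper's: for constant $p$, with high probability every pair of vertices of $\mathbbmsl{G}(n,p)$ extends to a copy of $F$ through that pair (the paper gets this from Spencer's extension theorem via \Cref{lem:clique}, you from Janson plus a union bound over pairs), so any weakly $(\mathbbmsl{G}(n,p),F)$-saturated graph is weakly $(K_n,F)$-saturated. Your aside that this inequality ``is in fact an equality'' is unjustified in general (exact stability for arbitrary $F$ is precisely what is not known; the paper only gets it under the extra hypothesis of \Cref{thm:Gnp=Kn}), but you only use the inequality, so no harm is done. The case $\delta(F)=1$, handled by \Cref{corol_transfer} with $S_n=\varnothing$ and the domination condition vacuous, is also correct. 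The gap is in the main case $\delta(F)\geqslant 2$.

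There, your phase-2 percolation is not merely unproven but provably false, and a count shows it. Take $F=K_4$, so $\lambda(F)=3$ and $\mathrm{wsat}(m,K_4)=2m-3$. Your seed has
$$\frac{n}{m}(2m-3)+2\left(\frac{n}{m}-1\right)=2n-\frac{n}{m}-2$$
edges, which is strictly smaller than $\mathrm{wsat}(n,K_4)=2n-3$ as soon as $n>m$. But by your own lower bound, with high probability every weakly $(\mathbbmsl{G}(n,p),K_4)$-saturated graph has at least $2n-3$ edges; hence with high probability your seed cannot be weakly $(\mathbbmsl{G}(n,p),K_4)$-saturated, i.e.\ the percolation must get stuck no matter in which order copies of $F$ are routed. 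The structural reason the backbone is the wrong shape: once $\lambda(F)\geqslant 2$, a copy of $F$ through a cross edge $uv$ with $u\in Q_i$, $v\in Q_j$ needs at least $\lambda(F)$ of its edges across every ``interval cut'' separating $Q_i$'s side from $Q_j$'s side, and in the seed the only available cross edges are the $\lambda(F)-1$ backbone edges between each consecutive pair; this forces the copy to occupy every intermediate clique, which is impossible once $Q_i$ and $Q_j$ are more than $|V(F)|$ apart along the backbone (and for $K_4$ even consecutive cliques get stuck: a typical cross pair has at most one common neighbor, while a $K_4$ through it needs two adjacent ones). The missing idea is exactly the paper's connector mechanism: each clique $V_i$ receives $O(|V(F)|^2)$ connector edges to sets $S_i$ and $R_i$ chosen \emph{adaptively inside common neighborhoods} (\Cref{lem:neighbor}), and a cross edge $xy$ is absorbed by first locating, via pigeonhole in the large common neighborhood $N_G(S\cup S_i\cup S_j\cup R_i\cup R_j\cup\{x,y\})$, a fresh $(s-2)$-set $T$ lying inside a single clique, and then merging $R_i$, $S_i$, $T$, and $x$ step by step. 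This hub structure, not a sparse path-like backbone, is what makes all $\Theta(n^2)$ cross edges addable; deferring it to ``an analogous percolation lemma'' defers the core of the theorem. (Your constant-size-clique skeleton could in fact be repaired by installing these connectors with $m$ a large constant depending on $\varepsilon$, $p$, and $|V(F)|$; the paper instead uses cliques of growing size $w(n)$ built from powers of Hamilton cycles in \Cref{lem:w(n)}, which makes the connector overhead $o(n)$ outright and avoids the $\varepsilon$-bookkeeping.)
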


Notice   that addition of isolated vertices to the graph $F$ does not change the weak saturation number.
Subsequently,  we only consider graphs $ F$  with   $ \delta(F) \geqslant 1$.
Especially, \Cref{th:asymp_stab} holds for all graphs $F$.
In \Cref{S.3}, we   present a sufficient condition on $\mathrm{ wsat}(n, F)$ for which $ \mathrm{ wsat}( \mathbbmsl{G}(n,p),F)= \mathrm{ wsat}(n,F)$ with high probability.

\begin{restatable}{theorem}{GnpKn}\label{thm:Gnp=Kn}
Let $ F$ be a graph  with   $ \delta(F) \geqslant 1$ and let $ \mathrm{ wsat}(n, F)\geqslant (\delta(F)-1)n+D $ for all    $ n $, where $ D$ is a constant depending on $ F $. Then,
there exist    a positive integer $ k$ and a constant $d_F$  such that $ \mathrm{ wsat}(n, F)= (\delta(F)-1)n+d_F $  and,   for any  $ p\geqslant n^{-1/(2k+3)}\log n$,
$\mathrm{ wsat}( \mathbbmsl{G}(n,p), F)= \mathrm{ wsat}(n, F) $  with high probability.
\end{restatable}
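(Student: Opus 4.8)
The plan is to separate the two assertions: the exact linear shape of $\mathrm{wsat}(n,F)$, and then its transfer to $\mathbbmsl{G}(n,p)$. Write $\delta=\delta(F)$ and set $g(n)=\mathrm{wsat}(n,F)-(\delta-1)n$. I would first prove the one-step inequality $\mathrm{wsat}(n+1,F)\leqslant \mathrm{wsat}(n,F)+(\delta-1)$: take an optimal weakly $(K_n,F)$-saturated graph $H_n$, adjoin a new vertex $u$, and join $u$ to any $\delta-1$ old vertices. Since $u$ has degree $\delta-1<\delta(F)$ it lies in no copy of $F$, so the graph stays $F$-free. For the saturation order, first run the order that completes $H_n$ to $K_n$ on the old vertices (this is unaffected by $u$), and then add the edges from $u$ to the remaining old vertices one at a time; fixing a vertex $z$ of $F$ of degree exactly $\delta$, each new edge completes a copy of $F$ in which $u$ plays the role of $z$ and $F-z$ is embedded into the now-complete $K_n$. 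Thus $g(n+1)\leqslant g(n)$, so by the hypothesis $g(n)\geqslant D$ the integer sequence $(g(n))$ is non-increasing and bounded below, hence eventually equal to a constant $d_F\geqslant D$. This gives the claimed formula for all large $n$, which is all that the random-graph statement uses.

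For the transfer I would prove $\mathrm{wsat}(\mathbbmsl{G}(n,p),F)\geqslant\mathrm{wsat}(n,F)$ and $\mathrm{wsat}(\mathbbmsl{G}(n,p),F)\leqslant\mathrm{wsat}(n,F)$ separately, each with high probability. For the lower bound I would show that, with high probability, $\mathbbmsl{G}(n,p)$ itself $F$-percolates to $K_n$, i.e.\ the missing edges of $K_n$ can be added to $\mathbbmsl{G}(n,p)$ in some order so that each creates a new copy of $F$. Granting this, every weakly $(\mathbbmsl{G}(n,p),F)$-saturated graph $H$ is automatically weakly $(K_n,F)$-saturated: it is $F$-free, and one runs its own order up to $\mathbbmsl{G}(n,p)$ and then appends the percolation order, so $|E(H)|\geqslant\mathrm{wsat}(n,F)$. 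The density needed merely to propagate copies of $F$ from the dense graph $\mathbbmsl{G}(n,p)$ is far below the density needed for the construction in the next paragraph, so this percolation holds comfortably throughout the range $p\geqslant n^{-1/(2k+3)}\log n$; concretely one checks that, with high probability, every pair of vertices already lies in enough common structure to be completed into a copy of $F$ once the graph has been locally filled in.

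The core of the argument, and the step that fixes the exponent, is the upper bound: exhibiting a weakly $(\mathbbmsl{G}(n,p),F)$-saturated subgraph with exactly $(\delta-1)n+d_F$ edges. Here one cannot imitate the $K_n$ construction, because $\mathbbmsl{G}(n,p)$ has no vertices of full degree---this is exactly where \Cref{thm:zhukovskii}, which needs a constant-size core adjacent to all vertices and hence constant $p$, breaks down. Instead I would fix a constant-size core $C$ realizing the additive constant $d_F$, together with a spanning structure in which all but $O(1)$ of the remaining vertices have degree exactly $\delta-1$, tuning the total to $(\delta-1)n+d_F$. Processing the vertices in a suitable order, the core bootstraps first, and then each outside vertex $w$ is \emph{activated}: using its $\delta-1$ stored edges together with one freshly added edge, $w$ plays a degree-$\delta$ vertex $z$ of $F$ while $F-z$ embeds into the already-activated part, after which every remaining $\mathbbmsl{G}(n,p)$-edge at $w$ can be filled in one by one. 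The integer $k$ is the parameter of $F$ determining the size of the gadget that must link each vertex to the activated part, and $p\geqslant n^{-1/(2k+3)}\log n$ is precisely the threshold---obtained via first/second-moment estimates and a union bound over the $n$ vertices---forcing such a gadget to be present at every vertex simultaneously with high probability; for $F=K_s$ this reproduces $2k+3=2s-3$, matching \cite{BMTZ}.

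The main obstacle is this construction. In $K_n$ the activation of an outside vertex is essentially free, since $F$ can always be completed inside the complete graph; in $\mathbbmsl{G}(n,p)$ every propagating copy of $F$ must itself consist of edges of the random graph, so the entire bootstrap has to be threaded through the sparse random structure. The delicate points are (a) guaranteeing, for \emph{all} $n$ vertices at once, that the required gadget occurs at the stated density, and (b) arranging the activation order so that it never calls for an absent edge. Handling the few vertices that happen to miss a gadget---absorbing them into the $O(1)$ exceptional set without disturbing the count $(\delta-1)n+d_F$---is the technical heart of the proof, and it is exactly this requirement that forces the exponent $2k+3$.
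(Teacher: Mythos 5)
Your overall strategy is the same as the paper's: (i) the one-step inequality $\mathrm{wsat}(n+1,F)\leqslant \mathrm{wsat}(n,F)+\delta-1$ (this is exactly the relation in \Cref{thm:upbnd}, and your proof of it is the standard one) gives that $\varphi(n)=\mathrm{wsat}(n,F)-(\delta-1)n$ is non-increasing, integer-valued and bounded below by $D$, hence eventually constant $d_F$; (ii) the lower bound comes from showing $\mathbbmsl{G}(n,p)$ itself percolates, so any weakly $(\mathbbmsl{G}(n,p),F)$-saturated graph is weakly $(K_n,F)$-saturated (the paper's \Cref{cor:wsatwsat}, via Spencer's lemma); (iii) the upper bound comes from a constant-size core realizing $d_F$ plus $\delta-1$ stored edges per outside vertex (the paper's \Cref{thm:upbnd random}). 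Parts (i) and (ii) are essentially complete and correct.

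The genuine gap is in part (iii), which you yourself defer as ``the technical heart'' without executing it. Two concrete problems. First, you never specify the gadget or prove its existence at the stated density. The paper's resolution is \Cref{HAM} (from \cite{BMTZ}): for $p\geqslant n^{-1/(2k+3)}\log n$, with high probability \emph{every} subset $S$ of size $k+2$ has the property that $G[N_G(S)]$ contains a high power of a Hamiltonian path. This is applied with $S=\{v\}\cup\mathnormal{\Omega}$ to store the $\delta-1$ edges of each outside vertex $v$ inside a locally clique-rich set, and, crucially, with $S=\{x,y\}\cup\mathnormal{\Omega}$ to add an edge $xy$ whose \emph{both} endpoints lie far from the core $\mathnormal{\Omega}$ --- a case your sketch never addresses, and it is exactly this application to $(k+2)$-sets that produces the exponent $2k+3$. (Relatedly, your claim that the construction ``reproduces $2k+3=2s-3$'' for $F=K_s$ is not substantiated; with the core size needed to realize $d_F$ one gets $2k+3\geqslant 2s+1$, and the stronger exponent $2s-3$ in \cite{BMTZ} requires a separate, more refined argument.) Second, your contingency plan --- ``absorbing the few vertices that happen to miss a gadget into the $O(1)$ exceptional set without disturbing the count'' --- cannot work for an exact result. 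The target $(\delta-1)n+d_F$ has no slack: the core (a $k$-clique carrying a weakly $(K_k,F)$-saturated graph) already costs $(\delta-1)k+d_F$ edges, and each of the $n-k$ outside vertices needs at least $\delta-1$ stored edges for its activation, so every outside vertex must get \emph{exactly} $\delta-1$ edges and not one more; a single exceptional vertex requiring extra edges pushes the total strictly above $\mathrm{wsat}(n,F)$, and the equality is lost. Hence the proof must rest on a structural property holding for all vertices and all pairs simultaneously with high probability --- which is precisely what \Cref{HAM} supplies, and what a ``most vertices, then absorb exceptions'' scheme cannot.
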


For some graphs $ F$ including complete graphs, complete bipartite graphs, and    connected graphs with minimum degree $ 1$ or $ 2$,  \Cref{thm:Gnp=Kn} shows that there is an $ \varepsilon(F)>0$ such that, for any $ p\geqslant n^{-\varepsilon(F)}\log n$,    $ \mathrm{ wsat}( \mathbbmsl{G}(n,p),F)= \mathrm{ wsat}(n,F)$   with high probability,    see \Cref{rem:stability1}.
Note  that \Cref{thm:Gnp=Kn} is a generalization of \Cref{thm:zhukovskii} and \Cref{corol_transfer} with simpler arguments. Actually, a weakly $(K_n, F)$-saturated graph having the structure  described in \Cref{thm:zhukovskii} has $(\delta(F)-1)n + D$ edges for some constant $D$ depending on $F$. Our proof does not require a specific structure of the weakly $(K_n, F)$-saturated graph, only the number of edges in it is needed. This makes it easier to construct arguments and apply the result not only to complete graphs and complete bipartite graphs which can be also easily done with \Cref{thm:zhukovskii}, but also to some other types of graphs.

Finally, we  consider the case $p = o(1)$ in \Cref{S.4} where  we present a condition which implies that $\mathrm{ wsat} (\mathbbmsl{G}(n,p), F) = e(\mathbbmsl{G}(n,p))(1 + o(1))$ with high probability.

Let us fix here more  notation and terminology   that we use in the rest of the  paper.
Let $G$ be a graph.
For a  vertex $ v$ of $ G$, we denote by $G-v$  the   graph obtained from  $G$ by removing   $v$   and all of its incident edges.
For two  nonadjacent vertices  $u, v$    of $ G$, let $G+uv$  denote the   graph obtained from  $G$  by adding an     edge   between   $u$  and $ v$.
For a  subset   $X$ of $V(G)$, we denote the induced subgraph of $G$ on $X$  by $G[X]$.
For two disjoint subsets $ S$ and $ T$ of $ V(G)$,   denote by $ E_{G}(S, T)$ the set of all edges with an endpoint in both $ S$ and $ T$.
For the purpose of simplicity, $E_G(v, T)$ is written instead of $E_G(\{v\}, T)$.
For a positive integer $d$, the {\sl $d$-th power}  of    $G$,   denoted by $G^d$,    is the graph  with    vertex set $V(G)$  and  two vertices $x, y$ are adjacent in $G^d$ if and only if the distance between  $x, y$  in  $G$  is at most $d$.

\section{Asymptotic stability}\label{S.2}

In this section, we prove that
$\mathrm{wsat}(\mathbbmsl{G}(n,p),F)= \mathrm{ wsat}(n,F)(1+o(1))$ with high probability for any constant
$p$  and any graph $F$.
To proceed, we need to recall  the following result on  the appearance of the   powers  of a Hamiltonian  cycle in $ \mathbbmsl{G}(n,p)$.

\begin{theorem}[\cite{kahn, Riordan}]\label{thm:Hamilton}
If $k\geqslant 2$ and  $p\gg n^{-1/k}$, then $\mathbbmsl{G}(n,p)$ contains the $k$-th power of a Hamiltonian cycle with high probability.
\end{theorem}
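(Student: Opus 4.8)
The plan is to attack the statement with the second moment method, which I expect to succeed cleanly when $k\geqslant 3$ and to break down at $k=2$, where a more powerful tool is required. As a first step I would fix the vertex set $[n]$ and let $X$ count the labelled $k$-th powers of a Hamiltonian cycle present in $\mathbbmsl{G}(n,p)$: a cyclic ordering $v_1,\dots,v_n$ contributes the edge set $\{v_iv_j : v_i,v_j \text{ are at cyclic distance at most } k\}$, so each such power has exactly $kn$ edges and there are $(n-1)!/2$ of them (for fixed $k$ and large $n$ these powers are pairwise distinct). By Stirling,
$$\mathbb{E}[X]=\frac{(n-1)!}{2}\,p^{kn}=\Big(\tfrac{np^{k}}{e}\Big)^{n(1+o(1))},$$
which tends to infinity precisely when $p\gg n^{-1/k}$. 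This locates the threshold correctly and supplies the expectation side of the argument.

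I would then try to show $\mathbb{E}[X^2]=(1+o(1))\,\mathbb{E}[X]^2$, so that $X>0$ with high probability via $\Pr[X>0]\geqslant \mathbb{E}[X]^2/\mathbb{E}[X^2]$. Writing
$$\mathbb{E}[X^2]=\sum_{H_1,H_2} p^{\,2kn-|E(H_1)\cap E(H_2)|},$$
the problem reduces to controlling the pairs $(H_1,H_2)$ of powers grouped by the size and shape of their common edge set. The nearly edge-disjoint pairs reproduce $\mathbb{E}[X]^2$; the real work is to show that pairs with a large shared subgraph, though rare, do not dominate once weighted by $p^{-|E(H_1)\cap E(H_2)|}$. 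Concretely I would decompose the overlap of the two cyclic orderings into maximal runs of consecutively shared vertices, count the pairs realising each overlap profile, and check that this count is beaten by the probabilistic gain when $p\gg n^{-1/k}$. For $k\geqslant 3$ this balance is favourable and the ratio tends to $1$; this is, in essence, Riordan's analysis in \cite{Riordan}.

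The hard part is $k=2$, where the second moment genuinely fails at the sharp threshold $p\gg n^{-1/2}$: the square of a Hamiltonian cycle is not balanced in the way the variance bound demands, and dense overlap configurations keep $\mathbb{E}[X^2]/\mathbb{E}[X]^2$ bounded away from $1$. Here I would abandon the moment computation and instead either (i) use absorption---embed most of the squared cycle by laying down long segments one after another and reserve a flexible absorbing gadget to splice in the leftover vertices---or (ii), to reach the sharp threshold with no spurious logarithmic factors, invoke the fractional expectation-threshold theorem of Frankston, Kahn, Narayanan, and Park, after verifying that a suitably spread-out distribution on squared Hamiltonian cycles has the right expectation-threshold; this is exactly the route of Kahn, Narayanan, and Park. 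In the final write-up I would simply cite \cite{Riordan} for $k\geqslant 3$ and \cite{kahn} for $k=2$, since carrying out either argument in full goes well beyond a short proof.
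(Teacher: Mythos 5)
The paper gives no proof of this theorem at all --- it is imported verbatim from the literature, citing Riordan for $k\geqslant 3$ and Kahn--Narayanan--Park for $k=2$ --- and your proposal, after sketching what lies behind those citations (second moment \`a la Riordan for $k\geqslant 3$, a stronger method for $k=2$), ultimately does exactly the same thing, so the approaches coincide. One caveat on your sketch of case $k=2$: a direct invocation of the Frankston--Kahn--Narayanan--Park fractional expectation-threshold theorem inherently loses a logarithmic factor and would only yield $p\geqslant C n^{-1/2}\log n$; the sharp $n^{-1/2}$ threshold of the cited Kahn--Narayanan--Park paper requires their bespoke iterative refinement, not a plain application of that general theorem.
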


\begin{lemma}\label{lem:w(n)}
Let $ p=n^{-o(1)} $. Then, there is a function $ w(n) $ tending to infinity as $ n\rightarrow \infty $ such that the vertex set of $  \mathbbmsl{G}(n, p) $ can be partitioned into  cliques of size at least $ w(n) $  with high probability.
\end{lemma}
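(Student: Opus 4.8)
The plan is to exhibit the required partition inside a spanning power of a Hamiltonian cycle. Observe that if $C$ is a Hamiltonian cycle on the vertices $v_1, v_2, \ldots, v_n$ listed in cyclic order, then in the $k$-th power $C^k$ any $k+1$ cyclically consecutive vertices are pairwise within distance $k$ and hence form a clique. Consequently, cutting the cycle into arcs of consecutive vertices, each of length at most $k+1$, yields a partition of $V(C^k)$ into cliques. Since $C^k$ is a subgraph of $\mathbbmsl{G}(n,p)$ whenever $\mathbbmsl{G}(n,p)$ contains the $k$-th power of a Hamiltonian cycle, such a partition transfers verbatim to $\mathbbmsl{G}(n,p)$.

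For a \emph{fixed} $k \geq 2$, the hypothesis $p = n^{-o(1)}$ gives $p\,/\,n^{-1/k} = n^{1/k - o(1)} \to \infty$, so $p \gg n^{-1/k}$ and \Cref{thm:Hamilton} applies, showing that $\mathbbmsl{G}(n,p)$ contains the $k$-th power of a Hamiltonian cycle with high probability. The difficulty is that the lemma demands cliques of size $w(n) \to \infty$, while \Cref{thm:Hamilton} controls only a fixed exponent $k$. I would overcome this with a standard diagonalization. For each $k \geq 2$, since the probability that $\mathbbmsl{G}(n,p)$ contains the $k$-th power of a Hamiltonian cycle tends to $1$, there is a threshold $N_k$ beyond which this probability exceeds $1 - 1/k$; after replacing the $N_k$ by a strictly increasing sequence, set $k(n) = \max\{k \geq 2 : N_k \leq n\}$. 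Then $k(n) \to \infty$, and because $n \geq N_{k(n)}$ the event that $\mathbbmsl{G}(n,p)$ contains the $k(n)$-th power of a Hamiltonian cycle holds with probability at least $1 - 1/k(n) \to 1$, hence with high probability.

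It remains to split the cycle so that no arc is too short. Writing $n = q(k+1) + r$ with $0 \leq r \leq k$, I would cut $V(C^{k(n)})$ into arcs of length $k+1$ and, if $r \geq 1$, merge the leftover arc of length $r$ with one full arc and redivide the resulting $k+1+r \leq 2k+1$ consecutive vertices into two arcs of nearly equal length. Every arc then has length between $\lfloor k/2 \rfloor + 1$ and $k+1$, so each remains a clique, and taking $w(n) = \lfloor k(n)/2 \rfloor + 1 \to \infty$ finishes the argument. The only genuine obstacle is the passage from a fixed $k$ to a growing $k(n)$; the diagonalization above resolves it, while the remainder bookkeeping is routine.
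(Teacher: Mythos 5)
Your proposal is correct and takes essentially the same route as the paper: both invoke \Cref{thm:Hamilton} for each fixed $k$ (valid since $p=n^{-o(1)}\gg n^{-1/k}$), diagonalize over the thresholds $N_k$ to obtain a growing power $k(n)$ that still holds with high probability, and then cut the Hamiltonian cycle into consecutive arcs of bounded length, which are cliques in the $k(n)$-th power. The only differences are cosmetic (the paper builds its increasing thresholds via a recursively chosen subsequence $m_k$ and takes arcs of length at least $w(n)/2$, while you adjust the leftover arc explicitly), so the two arguments are the same in substance.
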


\begin{proof}
Let $ \EuScript{H}_d$ be the event that $ \mathbbmsl{G}(n, p) $ contains the $d$-th power of a Hamiltonian cycle.
Since $ p=n^{-o(1)}$,  \Cref{thm:Hamilton} implies that, for any  $ \varepsilon >0$ and any  integer $k\geqslant 2 $, there is a minimum integer $ N(k, \varepsilon)$ such that $ \mathbbmsl{P}[ \EuScript{H}_k]\geqslant 1-\varepsilon$ for any $ n\geqslant N(k, \varepsilon)$.
Note that $ N(k, \varepsilon)\geqslant k+1$ and $ N(k_1, \varepsilon_1)\geqslant N(k_2, \varepsilon_2)$ if $ k_1\geqslant k_2$ and $ \varepsilon_1 \leqslant \varepsilon_2$.
Let $ m_1=2$ and for any $ k \geqslant 2$, define
$$m_{k}=\min\left\{\ell \, \left| \, \ell >m_{k-1}  \text{ and }   N\left(\ell, \frac{1}{k}\right)>N\left(m_{k-1}, \frac{1}{k-1}\right)\right.\right\}.$$
Clearly,  $\{N(m_k, \tfrac{1}{k})\}_{k=1}^{\infty} $ is strictly increasing.
Now, define the function $ w$ as $$ w(n)=m_k  \,  \text{ for any }  \, k\geqslant 1 \, \text{ and } \, n\in\left[N\left(m_k, \frac{1}{k}\right), N\left(m_{k+1}, \frac{1}{k+1}\right)\right).$$
Obviously,    $ w(n)\rightarrow \infty$ as $ n\rightarrow \infty$. We show that
$\mathbbmsl{P}[ \EuScript{H}_{w(n)}]\geqslant 1-\tfrac{1}{k} $ for every $ k$ and $ n\geqslant N(m_k, \tfrac{1}{k})$. To see this, fix $ k$ and assume that $ N(m_{\ell}, \tfrac{1}{\ell}) \leqslant n < N(m_{\ell+1}, \tfrac{1}{\ell+1})$ for some $ \ell \geqslant k$.  Then,
$$\mathbbmsl{P}\Big[ \EuScript{H}_{w(n)}\Big] = \mathbbmsl{P}\Big[ \EuScript{H}_{m_{\ell}}\Big]\geqslant 1-\frac{1}{\ell}\geqslant 1-\frac{1}{k}.$$
Thus, $ \mathbbmsl{G}(n, p) $ contains the $w(n)$-th power of a Hamiltonian cycle with high probability, say $ C $.
Note that every $ w(n)+1$ consecutive vertices of $ C $ form a clique in $ \mathbbmsl{G}(n, p) $. Now, by partitioning the vertices of $ \mathbbmsl{G}(n, p) $ so that each part  consists of at least $ w(n)/2$ consecutive vertices  of $ C$, the result follows.
\end{proof}

The following lemma is obtained from Theorem 2 in \cite{Spencer}.

\begin{lemma}[\cite{Spencer}]\label{lem:clique}
For any integer $ s\geqslant 3  $, there exists a constant  $ c $ such that if
$ p\geqslant cn^{-2/(s+1)}\log ^{{2}/((s-2)(s+1))}n$, then  every two vertices of $  \mathbbmsl{G}(n,p) $ have a clique of size $ s-2 $ in their common neighbors with high probability. 	
\end{lemma}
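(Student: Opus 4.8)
The plan is to recognise the statement as an \emph{extension statement} in the sense of Spencer \cite{Spencer} and to read off the threshold from the densities of the relevant rooted graph. Fix two vertices $u$ and $v$ and regard them as \emph{roots}. A clique of size $s-2$ inside $N_{\mathbbmsl{G}(n,p)}(u,v)$ is exactly an \emph{extension} of the pair $\{u,v\}$ by $s-2$ new vertices $w_1,\ldots,w_{s-2}$ that are pairwise adjacent and such that each $w_i$ is adjacent to both $u$ and $v$. Thus the lemma asserts that, with high probability, \emph{every} one of the $\binom{n}{2}$ root-pairs admits at least one such extension, and this is precisely the kind of conclusion provided by Theorem~2 of \cite{Spencer} once the density hypotheses are verified.

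First I would count. The full extension introduces $s-2$ new vertices and $\binom{s-2}{2}+2(s-2)=\tfrac{(s-2)(s+1)}{2}$ new edges, namely the $\binom{s-2}{2}$ clique edges together with the $2(s-2)$ edges joining the new vertices to the two roots. Hence, for a fixed pair $\{u,v\}$, the expected number of extensions is
\[
\binom{n-2}{s-2}\, p^{\,(s-2)(s+1)/2}=\Theta\!\left(n^{s-2}p^{\,(s-2)(s+1)/2}\right).
\]
Substituting $p=cn^{-2/(s+1)}\log^{2/((s-2)(s+1))}n$ makes the powers of $n$ cancel and leaves exactly one power of $\log n$, so this expectation equals $\Theta\big(c^{(s-2)(s+1)/2}\log n\big)$. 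This explains the exponent of the logarithm in the hypothesis: it tunes the per-pair expected number of extensions to order $\log n$, which is exactly the magnitude that a union bound over the $\Theta(n^2)$ root-pairs demands.

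Next I would check the sub-extensions, which govern whether the extension count concentrates. Adding only $j$ of the new vertices (a $K_j$ joined to both roots) contributes $j$ vertices and $\binom{j}{2}+2j=\tfrac{j(j+3)}{2}$ edges, giving a partial expected count $\varphi(j)=n^{j}p^{\,j(j+3)/2}$. Under the given $p$, a short computation shows that $\varphi(j)$ equals, up to constants and a bounded power of $\log n$, the quantity $n^{\,j(s-2-j)/(s+1)}$; thus $\varphi(j)$ grows polynomially in $n$ for every proper sub-extension $1\leqslant j\leqslant s-3$, and only the full extension $j=s-2$ sits at the critical order $\Theta(\log n)$. In particular the clique itself is the bottleneck and no proper sub-extension is rarer, so the density condition of Spencer's theorem is controlled by the full extension. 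Spencer's theorem then yields, for $c$ chosen large enough, that with probability $1-o(n^{-2})$ a fixed pair has $(1+o(1))$ times the expected number of extensions, hence at least one; a union bound over all $\binom{n}{2}$ pairs gives the claim.

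The main obstacle is bookkeeping rather than conceptual: one must match the exponents $\tfrac{(s-2)(s+1)}{2}$ and $\tfrac{2}{(s-2)(s+1)}$ appearing in the hypothesis to the densities of the extension, confirm that $\min_{1\leqslant j\leqslant s-2}\varphi(j)$ is attained at $j=s-2$ so that Spencer's density requirement reduces to the full clique, and then choose the constant $c$ large enough that the per-pair failure probability beats the factor $\binom{n}{2}$ in the union bound.
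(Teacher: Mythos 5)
Your proposal is correct and follows the same route as the paper, which does not prove the lemma itself but simply derives it from Theorem~2 of Spencer's paper on extension statements; your identification of the rooted extension (two roots, $s-2$ new vertices, $\tfrac{(s-2)(s+1)}{2}$ extension edges), the exponent bookkeeping showing the per-pair expectation is $\Theta(\log n)$, and the check that all proper sub-extensions $\varphi(j)=n^{j(s-2-j)/(s+1)+o(1)}$ are polynomially large (so the full clique is the critical, strictly balanced bottleneck) is exactly the verification needed to invoke that theorem.
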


The following corollary immediately follows from   \Cref{lem:clique}.

\begin{corollary}\label{cor:wsatwsat}
Let $ F$ be a graph and let    $ s=|V(F)|\geqslant 3 $  and     $\delta(F)\geqslant 1 $. There is a constant  $ c $ such that if
$p\geqslant cn^{-2/(s+1)}\log ^{{2}/((s-2)(s+1))}n$, then
$\mathrm{ wsat}( \mathbbmsl{G}(n, p), F) \geqslant \mathrm{ wsat}(n, F) $ with high probability.
\end{corollary}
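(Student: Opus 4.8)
The plan is to show that, on the high-probability event supplied by \Cref{lem:clique}, every minimum weakly $(\mathbbmsl{G}(n,p), F)$-saturated graph is in fact weakly $(K_n, F)$-saturated; this immediately gives $\mathrm{wsat}(n, F) \leqslant \mathrm{wsat}(\mathbbmsl{G}(n,p), F)$ with high probability. Concretely, take $c$ to be the constant from \Cref{lem:clique} for this value of $s$, and condition on the event $\mathcal{E}$ that every pair of vertices of $\mathbbmsl{G}(n,p)$ has a clique of size $s-2$ among their common neighbors, which holds with high probability under the stated bound on $p$. Let $H$ be a weakly $(\mathbbmsl{G}(n,p), F)$-saturated graph with $|E(H)| = \mathrm{wsat}(\mathbbmsl{G}(n,p), F)$; by definition $H$ is $F$-free and spans $K_n$.

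I would then exhibit an ordering of $E(K_n) \setminus E(H)$ witnessing weak $(K_n, F)$-saturation of $H$. Start with the weak $(\mathbbmsl{G}(n,p), F)$-saturation ordering of $E(\mathbbmsl{G}(n,p)) \setminus E(H)$: each such edge creates a copy of $F$ when added, and after all of them are inserted the current graph contains every edge of $\mathbbmsl{G}(n,p)$. Next process the remaining edges $E(K_n) \setminus E(\mathbbmsl{G}(n,p))$ in any order. When adding an edge $uv$ from this set, the current graph contains $\mathbbmsl{G}(n,p)$, so by $\mathcal{E}$ the vertices $u$ and $v$ have a clique $Q$ of size $s-2$ among their common neighbors in $\mathbbmsl{G}(n,p)$. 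The $s$ vertices $\{u,v\}\cup Q$ therefore span a complete graph $K_s$ once $uv$ is inserted; since $|V(F)| = s$ and $\delta(F) \geqslant 1$, we may embed $F$ into this $K_s$ by sending some fixed edge of $F$ to $uv$ and the other $s-2$ vertices of $F$ arbitrarily onto $Q$, producing a new copy of $F$ through $uv$.

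There is no genuine obstacle here—the argument is just the combinatorial translation of \Cref{lem:clique}—but a few routine points deserve a line of verification: the vertices of $Q$ lie in $N_{\mathbbmsl{G}(n,p)}(u,v)$ and are hence distinct from $u$ and $v$, so $\{u,v\}\cup Q$ really has $s$ vertices; the edge $uv$ is absent from the current graph at the moment it is added, since $uv \notin E(\mathbbmsl{G}(n,p))$ and the previously inserted edges are all different, so the step is legitimate; and the clique guaranteed in $\mathbbmsl{G}(n,p)$ persists in the larger current graph because the process only ever adds edges. Consequently $H$ is weakly $(K_n, F)$-saturated, whence $\mathrm{wsat}(n, F) \leqslant |E(H)| = \mathrm{wsat}(\mathbbmsl{G}(n,p), F)$ with high probability, as claimed.
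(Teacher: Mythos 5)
Your proposal is correct and follows essentially the same route as the paper: invoke \Cref{lem:clique} to get, with high probability, an $(s-2)$-clique in every common neighborhood, conclude that all edges of $E(K_n)\setminus E(\mathbbmsl{G}(n,p))$ can be appended after the $(\mathbbmsl{G}(n,p),F)$-saturation ordering of a minimum weakly $(\mathbbmsl{G}(n,p),F)$-saturated graph $H$, and deduce that $H$ is weakly $(K_n,F)$-saturated. The paper compresses this into the remark that $\mathbbmsl{G}(n,p)$ itself is weakly $F$-saturated with high probability; your version just spells out the concatenation of the two edge orderings and the embedding of $F$ into the resulting $K_s$, which is exactly the intended argument.
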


\begin{proof}
Let $ H $ be a weakly $ (\mathbbmsl{G}(n, p), F)$-saturated graph with minimum possible number of edges. Using   \Cref{lem:clique},   $  \mathbbmsl{G}(n, p) $ is weakly $ F$-saturated graph with high probability. This shows that $ H$ is also a weakly $F$-saturated graph, the result follows.
\end{proof}

\begin{remark}
A graph $ G $ is called {\sl balanced}  if $ (|E(H)| -1)/(|V(H)| -2) \leqslant \lambda_G $ for all
proper subgraphs $ H $ of $ G $ with $ |V(H)|\geqslant 3$, where $ \lambda_G= (|E(G)| -2)/(|V(G)| -2)$.
It was proved in \cite{BK} that if $ F $ is a balanced graph and $ p\gg n^{-1/\lambda_F+o(1)} $, then $  \mathbbmsl{G}(n, p) $ is weakly $F$-saturated with high probability,  implying that 	
$\mathrm{ wsat}( \mathbbmsl{G}(n, p), F) \geqslant \mathrm{ wsat}(n, F) $ with high probability.
In general, threshold probability functions  for the property that $  \mathbbmsl{G}(n, p) $ is weakly $F$-saturated are still unknown.
\end{remark}

The following lemma immediately follows from    the  Chernoff  bound \cite[Corollary A.1.2]{Alon.Book} and the union bound.

\begin{lemma}\label{lem:neighbor}
For any positive integer $ k $, there exists  a  constant  $ c$ such that if
$p\geqslant c((\log n)/n)^{1/k}$, then every $ k $-subset of the vertex set of $  \mathbbmsl{G}(n,p) $ has at least $ p^{k}n/2 $ common neighbors with high probability.
\end{lemma}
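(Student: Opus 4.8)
The plan is to prove \Cref{lem:neighbor} via a direct application of the Chernoff bound followed by a union bound over all $k$-subsets, as the statement itself suggests. First I would fix a $k$-subset $X = \{v_1, \ldots, v_k\}$ of the vertex set of $\mathbbmsl{G}(n,p)$ and let $W$ be any vertex outside $X$. The event that $W$ is a common neighbor of all vertices in $X$ means $W$ is adjacent to each $v_i$, which happens independently with probability $p^k$. Thus the number of common neighbors of $X$ is a sum of $n-k$ independent Bernoulli random variables, each with success probability $p^k$, so its expectation is $\mu = (n-k)p^k$. For large $n$ this is essentially $p^k n$, and in particular $\mu \geqslant \tfrac{3}{4}p^k n$ once $n$ is large.

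The key step is to control the lower tail of this sum. By the Chernoff bound in \cite[Corollary A.1.2]{Alon.Book}, the probability that the number of common neighbors of a fixed $X$ falls below $\mu/2$ (hence below $p^k n/2$ after accounting for the gap between $\mu$ and $p^k n$) is at most $\exp(-c'\mu)$ for an absolute constant $c' > 0$. Substituting $\mu \geqslant \tfrac{3}{4}p^k n$ gives a tail bound of the form $\exp(-c'' p^k n)$. The point of the hypothesis $p \geqslant c((\log n)/n)^{1/k}$ is precisely to make $p^k n \geqslant c^k \log n$, so that this single-subset failure probability is at most $n^{-c''c^k}$, which we can drive below any fixed negative power of $n$ by choosing the constant $c$ large enough.

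Finally I would apply the union bound over all $\binom{n}{k}$ choices of $X$. Since $\binom{n}{k} \leqslant n^k$, the total failure probability is at most $n^k \cdot \exp(-c'' p^k n) \leqslant n^k \cdot n^{-c''c^k}$. Choosing $c$ large enough that $c''c^k > k+1$ (or any fixed exponent exceeding $k$) makes this bound tend to $0$ as $n \to \infty$, which establishes that with high probability every $k$-subset simultaneously has at least $p^k n/2$ common neighbors.

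The proof is genuinely routine, so I do not expect a serious obstacle; the only point requiring slight care is the bookkeeping that converts the Chernoff deviation from $\mu/2$ into the stated bound $p^k n/2$, since $\mu = (n-k)p^k$ is marginally smaller than $p^k n$. For fixed $k$ this discrepancy is absorbed into the constant, so I would simply note that $\mu \geqslant (1-o(1))p^k n$ and that the $\mu/2$ threshold from Chernoff lies comfortably above $p^k n / 2$ for large $n$ after adjusting $c$. No other step demands delicacy, which is why the authors present this as a lemma that ``immediately follows'' from Chernoff and the union bound.
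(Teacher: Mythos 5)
Your proposal is correct and is exactly the argument the paper intends: the paper gives no written proof at all, stating only that the lemma ``immediately follows from the Chernoff bound \cite[Corollary A.1.2]{Alon.Book} and the union bound,'' which is precisely your fix-a-subset, Chernoff, union-over-$\binom{n}{k}$ scheme. One bookkeeping sentence is inverted, though: $\mu/2 = (n-k)p^k/2$ lies slightly \emph{below} $p^k n/2$, not above it, so bounding $\mathbbmsl{P}[X < \mu/2]$ alone does not cover the event $\{X < p^k n/2\}$; the routine repair (which you correctly flagged as absorbable for fixed $k$) is to apply Chernoff at a threshold such as $\tfrac{3}{4}\mu$, which does exceed $p^k n/2$ once $n \geqslant 3k$.
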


The following interesting   result, due to   Alon \cite{alon}, will be used in the  next theorem and the subsequent sections.

\begin{theorem}[\cite{alon}]\label{thm:alon}
Let $ F$ be a graph with  $\delta(F)\geqslant 1 $. Then, there exists   a constant $ c_F $ such that $\mathrm{ wsat}(n, F)= (c_F + o(1))n $.
\end{theorem}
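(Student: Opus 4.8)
The plan is to prove that the sequence $f(n) := \mathrm{wsat}(n, F)$ is approximately subadditive, i.e. that there is a constant $C = C(F)$ with $f(a+b) \le f(a) + f(b) + C$ for all sufficiently large $a,b$, and then to invoke Fekete's lemma (applied to $f + C$) to conclude that $\lim_{n\to\infty} f(n)/n$ exists and is finite; this limit is the desired $c_F$, finiteness being clear since $0 \le f(n) \le \binom{n}{2}$. Write $t = |V(F)|$ and $\delta = \delta(F)$. As a sanity check on the value, I would first record the easy bound $f(n+1) \le f(n) + (\delta - 1)$: given an extremal weakly $F$-saturated graph on $[n]$, add a new vertex $v$ joined to $\delta - 1$ arbitrary vertices; then $v$ lies in no copy of $F$ (its degree is below $\delta$), so the graph stays $F$-free, and after first completing $K_{[n]}$ one can insert each edge $vu$ by realizing a minimum-degree vertex of $F$ at $v$, its neighbours at $u$ and the $\delta - 1$ chosen vertices, and the rest of $F$ inside the clique. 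This gives $\limsup f(n)/n \le \delta - 1$, consistent with all the exact values quoted in the introduction.

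For subadditivity, take disjoint vertex sets $A, B$ with $|A| = a$, $|B| = b$ and extremal weakly $F$-saturated graphs $H_A, H_B$ on them. The glued graph will be $H = H_A \sqcup H_B \cup M$, where $M$ is a constant-size \emph{connector} whose only role is to ignite the cross edges. The saturation would run in three phases: first run the $H_A$-process to turn $A$ into a clique (each added edge creates a copy of $F$ inside $A$, which survives in $H$), then likewise turn $B$ into a clique, and finally add all edges between $A$ and $B$. The entire difficulty is to choose $M$ with $|M| = O(1)$ so that the cross edges cascade and, simultaneously, the initial graph $H$ contains no copy of $F$.

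For the cascade I would bootstrap $t - 1$ \emph{universal} vertices: pick $w_1, \ldots, w_{t-1} \in A$ and pairwise disjoint sets $T_1, \ldots, T_{t-1} \subseteq B$ with $|T_i| = t - 1$, and let $M$ be the union of the stars joining each $w_i$ to $T_i$, so $|M| = (t-1)^2$. Once $A$ and $B$ are cliques, $\{w_i\} \cup T_i$ is a clique $K_t$, so each edge $w_i b$ completes a clique $K_{t+1}$ on $\{w_i, b\} \cup T_i$ and hence creates a copy of $F$; this makes every $w_i$ adjacent to all of $B$, i.e. universal. Afterwards any remaining cross edge $ab$ completes the clique $K_{t+1}$ on $\{a, b\} \cup \{w_1, \ldots, w_{t-1}\}$ and so creates a copy of $F$ containing it. When $F$ happens to have a bridge the connector is unnecessary: $M = \varnothing$ works, since each cross edge can serve as the bridge of a fresh copy with its two sides placed in the two cliques, giving the clean bound $f(a+b) \le f(a) + f(b)$.

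The main obstacle is ensuring that the initial graph $H = H_A \sqcup H_B \cup M$ is $F$-free, as this is forced by the definition of weak saturation. Because $H_A$ and $H_B$ are extremal they have bounded average degree (at most $2(\delta - 1) + o(1)$ by the upper bound above), hence are very sparse, and the cross edges form a disjoint union of stars; I would exploit this sparsity to select the centres $w_i$ and the leaf-sets $T_i$ so that no copy of $F$ can route through the connector (for instance by avoiding the short cycles and common neighbours a copy of $F$ would require). This is where a genuine case analysis enters: bipartite $F$ must be kept away from any dense bipartite connector such as $K_{t-1,t-1}$, which is exactly why I use disjoint stars rather than a complete bipartite gadget, and disconnected $F$ needs separate care because $H_A \sqcup H_B$ can already contain $F$ by splitting its components between $A$ and $B$. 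Finally, I would remark that pairing the upper bound with a matching lower bound $f(n) \ge (\delta - 1)n - O(1)$ — which for cliques is Lovász's exterior-algebra argument and would have to be generalized — would pin down $c_F = \delta - 1$ exactly; but that sharper statement is not needed here, since subadditivity together with Fekete already yields the asymptotic claim.
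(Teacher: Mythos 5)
The paper does not prove this theorem; it is quoted from Alon's paper \cite{alon}, so there is no internal proof to compare against and your argument must stand on its own. Its skeleton is reasonable: approximate subadditivity with constant error plus Fekete's lemma would indeed yield the statement, the increment bound $\mathrm{wsat}(n+1,F)\leqslant \mathrm{wsat}(n,F)+\delta-1$ is correct, and the ignition cascade (complete $A$ and $B$ to cliques, bootstrap $t-1$ universal vertices through the stars, then finish all cross edges) works exactly as you describe.

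The problem is that the one step you yourself call ``the entire difficulty'' --- choosing a constant-size connector $M$ so that $H_A\sqcup H_B\cup M$ is $F$-free --- is never carried out, and as stated the construction fails. Two concrete failures. First, if $F$ is a star $K_{1,t-1}$, your connector consists of vertex-disjoint copies of $K_{1,t-1}$, i.e.\ of $F$ itself, so $H$ is not $F$-free; this case can be patched separately (there $\mathrm{wsat}(n,F)$ is constant), but it shows the connector is not harmless, and even for connected bridgeless $F$ the required selection of the $w_i$ and $T_i$ avoiding copies of $F$ that route through both $H_A$-edges at $w_i$ and $H_B$-edges at $T_i$ is only gestured at, not proved to exist. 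Second, and fatally for the general statement, when $F$ is disconnected no connector whatsoever can help, because $H_A\sqcup H_B$ alone may contain $F$: take the paper's own example $F=K_3\cup D_3$ (the remark following \Cref{lem:delta2}), where $D_3$ is two triangles joined by an edge. The near-extremal weakly $F$-saturated graph $D_3\cup\tfrac{n-6}{3}P_3$ contains $D_3$, so the disjoint union of two such graphs contains a $K_3$ (inside one copy of $D_3$) vertex-disjoint from a $D_3$, i.e.\ a copy of $F$. Since you take $H_A,H_B$ to be arbitrary extremal graphs, nothing rules this out; rescuing the argument would require proving that extremal (or near-extremal up to an additive constant) weakly $F$-saturated graphs exist whose disjoint union is $F$-free, and that is precisely the missing content. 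The same example also refutes your closing heuristic that $c_F=\delta(F)-1$: there $\delta(F)=2$ yet $\mathrm{wsat}(n,F)\leqslant\tfrac{2n}{3}+3$, so $c_F\leqslant\tfrac{2}{3}<1$, and in particular no matching lower bound $(\delta-1)n-O(1)$ exists in general. So the disconnected case, exactly the one your gluing cannot handle, is not a boundary nuisance needing ``separate care''; it is where the theorem's content lies.
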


We are now in a position to prove  the main result of this section. Recall \Cref{th:asymp_stab}.

\asympstab*

\begin{proof}
It follows from    \Cref{thm:alon} that there exists a constant $c_F$ such that $ \mathrm{ wsat}(n,F)=(c_F+o(1))n$.
By   \Cref{cor:wsatwsat}, it remains to show that $ \mathrm{ wsat}( \mathbbmsl{G}(n,p),F)\leqslant(c_F+o(1))n$ with high probability.

Note that $ \mathrm{ wsat}(G, K_2)=0 $ for every graph $ G$ and so there is nothing to prove  when  $ F=K_2$. Therefore, we may assume that $F$ has $s\geqslant 3$ vertices.
By  \Cref{lem:w(n)}, there is a function $ w(n) $ such that $ w(n) $ goes to infinity when $ n\rightarrow \infty$ and,  with high probability,   the vertex set of
$G \thicksim   \mathbbmsl{G}(n,p)$ admits a partition into $V_1,\ldots,V_m$ such that each $V_i$ is a clique of  size at least $w(n)$.

Fix an $(s-2)$-subset $ S$ in $ V_1$ and let $ i\in\{1, \ldots,  m\}$. If $ |N_G(S)\cap V_i|\geqslant s-1$, then let $ S_i$ be an arbitrary $ (s-1)$-subset of  $ N_G(S)\cap V_i $ and otherwise, let $ S_i$ be an arbitrary $ (s-1)$-subset of  $V_i\setminus S $ containing $N_G(S)\cap V_i$.  Consider the set $ N_G(S \cup S_i)$. Using   \Cref{lem:neighbor}, we have $ |N_G(S \cup S_i)|\geqslant p^{2s-3}n/2$. Since $ p$ is constant and $m\leqslant n/w(n) $, there exists an index $i'$ such that $ | N_G(S \cup S_i) \cap V_{i'}|\geqslant s-2$.
Let $ R_i$ be an arbitrary $ (s-2)$-subset of $ N_G(S \cup S_i) \cap V_{i'} $.

We are going to  introduce  a  weakly $ (G, F) $-saturated graph $ H$  with  $ |E(H)|\leqslant (c_F+o(1))n$. For $i= 1, \ldots,  m$, let $ H_i $ be a weakly $ (G[V_i], F) $-saturated graph with $ \mathrm{ wsat}(|V_i|,F)$ edges. Define $ H$ to be a spanning subgraph of $ G$ with $ E(H)=\bigcup_{i=1}^{m}(E(H_i)\cup E_G(S_i, S\cup R_i) )$.
We have
$$|E(H)|\leqslant	\sum_{i=1}^{m}\Big(\big(c_F+o(1)\big)|V_i|+2(s-1)(s-2)\Big)=\big(c_F+o(1)\big)n.$$
So, it remains to prove that $ H$ is weakly $(G, F)$-saturated. To do this, we show that the edges in $ E(G)\setminus E(H)$ can be added to $ H $ in some order through a weakly $ (G, F)$-saturated process.
We use $ H $ to denote the resulting graph in any step of the process as well. Note that, in each step of the process, every edge $ xy\in E(G)\setminus E(H) $ can be added to $ H$ whenever $ N_H(x, y)$ contains a clique of size   $ s-2 $ in $ H$.
For $ i=1, \ldots,  m$, as $ H_i$ is weakly $ (G[V_i], F)$-saturated, we may add the edges in $E(G[V_i])\setminus E(H_i) $ in an appropriate order.
By doing this, each $ V_i$ becomes a clique in $ H$.

Then, we add all edges in $ E_G(S, N_G(S))\setminus E(H)$ to $ H $ as follows. Let $ x\in N_G(S)\cap V_i$ for some $ i$. If $ x\in S_i$, then $ x$ is already joined to all vertices of $ S$ in $ H$ and there is nothing to prove. So, we may assume that $ x\notin S_i $. In this case, according to the choice of $S_i$, we have  $ S_i \subseteq N_G(S)$ and therefore $ S_i \subseteq N_H(S)$ . Hence, for every $ y\in S $, it follows from $ S_i\subseteq N_H(x, y)$ that the edge $ xy $ can be added to $ H$.

Now, we add all edges in $ E_G(V_i, V_j)\setminus E(H)$ to $ H $ for all distinct indices  $i, j\in \{1, \ldots,  m\}$ as follows. Let $ x\in V_i$ and $ y\in V_j$ with  $ 1\leqslant i<j\leqslant m$ such that $ x,  y$ are not adjacent in $ H$.
Consider the set $ U= N_G(S\cup S_i\cup S_j\cup R_i\cup R_j\cup \{x, y\})$. Using   \Cref{lem:neighbor}, we have $ |U|\geqslant p^{5s-6}n/2$.
Since $ p$ is constant and $m\leqslant n/w(n) $, there exists an index  $ k$ such that $ | U \cap V_k|\geqslant s-2$.
Let $ T$ be an arbitrary $ (s-2)$-subset of $ U \cap V_k $.

We claim that $x$ can be joined to all vertices in $T$ in $H$.
First, we add all remaining edges between $ R_i$ and $ T $ to $ H $. This is possible since both $ R_i$ and  $ T$ are subsets of $ N_G(S) $ and are already joined to $ S$ in $ H$. Next,  we add all remaining edges between $ S_i$ and $ T $ to $ H $. This is possible since both $ S_i$ and  $ T$ are already joined to $R_i$ in $ H$.  Finally, we add all remaining edges between $ x $ and $ T $ to $ H $. This is possible since both $ x$ and  $ T$ are already joined to $ S_i\setminus\{x\} $ in $ H$. Therefore, the claim is proved.
Similarly, $ y$ can be joined to  all vertices in $ T $. Now, we  may add the edge $ xy $ to $ H $, since $ T\subseteq N_H(x, y)$.
The proof is completed.
\end{proof}

\section{Exact stability}\label{S.3}

In this section, we examine the property $ \mathrm{ wsat}(\mathbbmsl{G}(n,p), F)= \mathrm{ wsat}(n,F)$. We present a sufficient condition on $ \mathrm{ wsat}(n, F) $ such that the latter equality holds. The class of graphs satisfying the presented condition includes complete graphs, complete bipartite graphs, graphs with minimum degree $ 1 $, and graphs with minimum degree $ 2 $ which either are  connected or have no cut edge.
First, we recall the following general lower bound on $ \mathrm{ wsat}(n, F)$ which we will later extend it to $ \mathrm{ wsat}(\mathbbmsl{G}(n,p), F)$

\begin{theorem}[\cite{FGJ}]\label{thm:lowbnd}
Let $ F$ be a graph with $ s $ vertices, $ t $ edges, and minimum degree $ \delta\geqslant 1$. Then, for any $ n\geqslant s$,
$$\mathrm{ wsat}(n, F)\geqslant     t-1 + \frac{ (\delta-1)(n-s)}{2}.$$
\end{theorem}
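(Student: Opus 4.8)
The plan is to take an arbitrary weakly $(K_n, F)$-saturated graph $H$ and bound $|E(H)|$ from below directly; since $\mathrm{wsat}(n, F)$ is the minimum of $|E(H)|$ over all such graphs, the stated inequality follows. Fix an ordering $e_1, e_2, \dots$ of the edges of $E(K_n) \setminus E(H)$ realizing the weak saturation process. The key structural observation I would establish first is that \emph{every} vertex $v$ satisfies $d_H(v) \geq \delta - 1$. To see this, I distinguish two cases. If every edge incident to $v$ already lies in $H$, then $d_H(v) = n - 1 \geq \delta - 1$, using $n \geq s \geq \delta + 1$. Otherwise, let $e_i = vu$ be the first added edge incident to $v$. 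Adding $e_i$ creates a copy of $F$ containing $e_i$, and inside this copy $v$ has at least $\delta$ neighbours; all of its incident edges in this copy except $e_i$ were present before step $i$, and since $e_i$ is the first added edge at $v$, those edges lie in $H$. Hence $v$ has at least $\delta - 1$ neighbours in $H$.

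Next I would isolate the contribution of a single copy of $F$. Because $H$ itself is $F$-free, adding $e_1$ must create a copy of $F$ through $e_1$; let $W$ be its $s$-vertex image. Deleting $e_1$ from this copy leaves a subgraph of $H[W]$ isomorphic to $F$ minus an edge, so $H[W]$ contains at least $t - 1$ edges.

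Finally I would combine the two bounds by splitting $E(H)$ into the edges lying inside $W$ and the edges having at least one endpoint in $V(K_n) \setminus W$; these two families are disjoint. The first family contributes at least $t - 1$. For the second, the sum $\sum_{v \in V(K_n) \setminus W} d_H(v)$ counts each relevant edge at most twice, so the number of edges meeting $V(K_n) \setminus W$ is at least $\tfrac{1}{2} \sum_{v \notin W} d_H(v) \geq \tfrac{1}{2}(\delta - 1)(n - s)$, using the degree bound on the $n - s$ vertices outside $W$. Summing the two contributions yields exactly $t - 1 + \tfrac{(\delta - 1)(n - s)}{2}$.

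The genuinely substantive step is the uniform lower bound $d_H(v) \geq \delta - 1$, which captures why the minimum degree of $F$ forces linearly many edges in $H$; once it is in place, the remainder is merely a disjoint edge count. The only point demanding care is to keep the ``edges inside $W$'' and ``edges touching $V(K_n) \setminus W$'' tallies from overlapping, which the chosen partition of $E(H)$ handles automatically.
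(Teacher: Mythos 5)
Your proof is correct and takes essentially the same approach as the paper: although the paper states this theorem as a citation to \cite{FGJ}, the theorem it proves immediately afterwards (the lower bound for an arbitrary host graph $G$) uses exactly your argument — the first created copy of $F$ forces at least $t-1$ edges inside its vertex set, every vertex outside that set has degree at least $\min\{\delta(G),\delta(F)-1\}$ in $H$, and the two edge families are disjoint. Specializing that proof to $G=K_n$ (where $\delta(G)=n-1\geqslant\delta(F)-1$) recovers your write-up essentially verbatim.
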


\begin{theorem}
Let $ G$ and $ F$ be two given graphs with $|V(G)|\geqslant |V(F)| $ and $ \delta(F)\geqslant 1$. Then,
$$\mathrm{ wsat}(G, F)\geqslant \min\left\{|E(G)|, |E(F)|-1 + \frac{\min\big\{\delta(G), \delta(F)-1\big\}\big(|V(G)|-|V(F)|\big)}{2}\right\}.$$
\end{theorem}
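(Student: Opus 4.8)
The plan is to transplant the counting argument behind the complete-graph bound of \Cref{thm:lowbnd} to an arbitrary host graph $G$. Write $s=|V(F)|$, $t=|E(F)|$, $\delta=\delta(F)$, $N=|V(G)|$, and set $d=\min\{\delta(G),\delta-1\}$, so that the target quantity is $\min\{|E(G)|,\, t-1+d(N-s)/2\}$. Let $H$ be a weakly $(G,F)$-saturated graph with $\mathrm{wsat}(G,F)$ edges, and fix an ordering $e_1,e_2,\dots$ of $E(G)\setminus E(H)$ witnessing the saturation. If this ordering is empty, then $H=G$ and $|E(H)|=|E(G)|$, which already meets the bound; so from now on I may assume that at least one edge is added, and it is precisely this assumption that is later responsible for the appearance of the term $|E(G)|$ inside the minimum.

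First I would extract the $t-1$ term. The first added edge $e_1$ completes a copy $F_1$ of $F$; let $W=V(F_1)$, so $|W|=s$. Since $e_1$ is the very first edge of the whole process, the graph just before its addition is exactly $H$, hence all $t-1$ edges of $F_1$ other than $e_1$ already belong to $H$ and lie inside $W$. Thus $|E(H[W])|\geqslant t-1$. Next I would show that every vertex $v\in\bar W:=V(G)\setminus W$ satisfies $d_H(v)\geqslant d$, splitting into two cases. If no edge of the ordering is incident to $v$, then $N_H(v)=N_G(v)$, so $d_H(v)=d_G(v)\geqslant\delta(G)\geqslant d$. Otherwise, let $e_i=vu$ be the first added edge incident to $v$; the copy of $F$ created at step $i$ contains $v$ with degree at least $\delta$, so $v$ has at least $\delta-1$ neighbours other than $u$ joined to it by edges present before step $i$. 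As $e_i$ is the first edge of the process touching $v$, each such edge already lies in $H$, giving $d_H(v)\geqslant\delta-1\geqslant d$.

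Finally I would assemble the pieces by a handshake count over $\bar W$. From
\[
\sum_{v\in\bar W}d_H(v)=2\,|E(H[\bar W])|+|E_H(W,\bar W)|\geqslant d(N-s)
\]
it follows that $|E(H[\bar W])|+|E_H(W,\bar W)|\geqslant\tfrac12 d(N-s)$, and adding $|E(H[W])|\geqslant t-1$ yields
\[
|E(H)|=|E(H[W])|+|E_H(W,\bar W)|+|E(H[\bar W])|\geqslant t-1+\frac{d(N-s)}{2},
\]
as required. The only genuine subtlety — and the reason both $\delta(G)$ and $|E(G)|$ enter the minimum — is the bookkeeping for vertices that the process never touches: their degree is controlled by $G$ rather than by $F$, which forces the factor $\delta(G)$ in the untouched case and the separate treatment of $H=G$. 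Everything else is the same double counting as in the $K_n$ argument, with the edges inside $W$ carrying the $t-1$ term and the remaining $N-s$ vertices each contributing degree at least $d$.
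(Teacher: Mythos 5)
Your proof is correct and follows essentially the same route as the paper's: both isolate the copy $F_1$ created by the first added edge to obtain the $|E(F)|-1$ term, show every vertex outside $V(F_1)$ has degree at least $\min\{\delta(G),\delta(F)-1\}$ in $H$ by the same two-case analysis (untouched vertices inherit $\delta(G)$, touched vertices get $\delta(F)-1$ from the first incident added edge), and conclude by double counting. Your write-up is in fact slightly more explicit than the paper's, since you spell out the handshake inequality that justifies the division by $2$.
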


\begin{proof}
Let $ H $ be a weakly $ (G,F) $-saturated graph with minimum possible number of edges. If $ H=G $, then there is nothing to prove. So, assume that $ H\neq G $. When the first edge in $ E(G)\setminus E(H) $ is added  to $ H $ a copy $ F_1 $ of $ F $ is created. Letting $ H_1=H[V(F_1)] $, we have $ |E(H_1)|\geqslant |E(F)|-1 $.
Also, every vertex $ v\in V(H)\setminus V(H_1) $ must have degree at least
$\min\{\delta(G), \delta(F) -1\} $ in $ H $. To see this, if all edges in $ G $ incident to $ v $  appear in $ H $, then $ d_H(v)=d_G(v)\geqslant\delta(G) $. Otherwise,  when the first edge incident to $ v $ from $ E(G)\setminus E(H) $ is added to $H $, the degree of $ v $ in the resulting graph
must be at least $ \delta(F) $, implying $ d_H(v)\geqslant \delta(F)-1 $. Therefore,
\begin{align*}
|E(H)|&\geqslant |E(H_1)|+|E(H)\setminus E(H_1)|\\
&\geqslant  |E(F)|-1+\frac{\min\big\{\delta(G), \delta(F)-1\big\}\big(|V(G)|-|V(F)|\big)}{2}.
\qedhere
\end{align*}
\end{proof}

We know from    \cite[Theorem 3.4]{A.F} that if $ p\gg \tfrac{\log n}{n} $, then $ \delta( \mathbbmsl{G}(n,p))= np(1+o(1)) $ with high probability. From  this fact, we obtain the following consequence.

\begin{corollary}\label{cor:delta}
Let $ F$ be a graph with $ s $ vertices, $ t $ edges, and minimum degree $ \delta\geqslant 1$. If $ p\gg \tfrac{\log n}{n} $, then
$$\mathrm{ wsat}\big( \mathbbmsl{G}(n,p), F\big)\geqslant t-1 + \frac{ (\delta-1)(n-s)}{2},$$
with high probability.
\end{corollary}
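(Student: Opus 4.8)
The plan is to apply the general lower bound just established, taking $G = \mathbbmsl{G}(n,p)$, and then to argue that with high probability the minimum appearing there is attained by its second summand. Since $|V(\mathbbmsl{G}(n,p))| = n$, $|V(F)| = s$, and $|E(F)| = t$, that theorem gives
$$\mathrm{ wsat}\big(\mathbbmsl{G}(n,p), F\big) \geqslant \min\left\{e(\mathbbmsl{G}(n,p)),\ t - 1 + \frac{\min\big\{\delta(\mathbbmsl{G}(n,p)),\, \delta - 1\big\}(n - s)}{2}\right\}.$$
It therefore remains only to determine which branch of the minimum is active with high probability.

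First I would dispose of the inner minimum over the two minimum degrees. As $F$ is fixed, $\delta$ is a constant, whereas the cited fact from \cite{A.F} gives $\delta(\mathbbmsl{G}(n,p)) = np(1+o(1))$ with high probability. Because $p \gg (\log n)/n$ forces $np \gg \log n \rightarrow \infty$, we have $\delta(\mathbbmsl{G}(n,p)) \rightarrow \infty$, so in particular $\delta(\mathbbmsl{G}(n,p)) \geqslant \delta - 1$ with high probability; on this event $\min\{\delta(\mathbbmsl{G}(n,p)), \delta - 1\} = \delta - 1$, and the second summand is exactly $t - 1 + \tfrac{(\delta - 1)(n - s)}{2}$. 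Next I would check that the edge-count branch does not bind: on the same event the handshake bound yields $e(\mathbbmsl{G}(n,p)) \geqslant \tfrac{1}{2}\, n\, \delta(\mathbbmsl{G}(n,p)) = \tfrac{1}{2}\, n^2 p\,(1+o(1))$, which is of order at least $n \log n$ and hence $\omega(n)$, while the second summand is merely $O(n)$ because $s$, $t$, and $\delta$ are constants. Thus the edge count strictly exceeds the second summand with high probability, so the minimum equals the second summand and the claimed inequality follows.

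The argument is essentially routine once the general theorem is in hand; the only point meriting attention is confirming that $e(\mathbbmsl{G}(n,p))$ is never the binding constraint, and this is immediate from the observation that $\delta(\mathbbmsl{G}(n,p)) \rightarrow \infty$ already forces a superlinear number of edges. The proof rests on two high-probability events—the minimum-degree estimate of \cite{A.F} and the edge lower bound it implies—whose intersection again holds with high probability, which completes the plan.
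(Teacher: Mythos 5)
Your proposal is correct and follows the same route as the paper: the paper derives this corollary directly from the preceding general lower bound together with the fact from \cite{A.F} that $\delta(\mathbbmsl{G}(n,p)) = np(1+o(1))$ with high probability, which is exactly your argument. Your explicit verification that the edge-count branch of the minimum is not binding (via the handshake bound $e(\mathbbmsl{G}(n,p)) \geqslant \tfrac{1}{2}n\,\delta(\mathbbmsl{G}(n,p)) = \omega(n)$ versus the $O(n)$ second summand) is a detail the paper leaves implicit, but it is the intended reasoning.
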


Now, we switch to an upper bound on $\mathrm{wsat}(n, F)$.

\begin{theorem}[\cite{FGJ}]\label{thm:upbnd}
Let $ F$ be a  graph  with $ s$ vertices and minimum degree $ \delta\geqslant 1 $. Then, for every $ n\geqslant m\geqslant s-1 $,
\begin{align}\label{eq:nnn}
\mathrm{ wsat}(n, F)\leqslant (\delta-1)(n-m)+   \mathrm{ wsat}(m, F).
\end{align}
In particular, for every $ n\geqslant s-1 $,
\begin{align}\label{up:nnn}
\mathrm{ wsat}(n, F)\leqslant (\delta-1)n+  \frac{(s-1)(s-2\delta )}{2}.
\end{align}
\end{theorem}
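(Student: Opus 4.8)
The plan is to prove the recursive inequality \eqref{eq:nnn} by an explicit construction and then obtain the closed form \eqref{up:nnn} by specializing $m$. Fix a vertex $u$ of $F$ of minimum degree $\delta$, and note that $s \geqslant \delta + 1$. Starting from a weakly $(K_m, F)$-saturated graph $H_m$ on the vertex set $\{1, \dots, m\}$ with exactly $\mathrm{wsat}(m, F)$ edges, I would build $H_n$ by adjoining $n - m$ new vertices $v_1, \dots, v_{n-m}$ and joining each $v_j$ to the same $\delta - 1$ vertices of $\{1, \dots, m\}$; this is possible as $m \geqslant s - 1 \geqslant \delta$. Then $|E(H_n)| = \mathrm{wsat}(m, F) + (\delta - 1)(n - m)$, so it suffices to check that $H_n$ is weakly $(K_n, F)$-saturated.

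The graph $H_n$ is $F$-free: each new vertex has degree $\delta - 1 < \delta$ and hence cannot lie in a copy of $F$, so any such copy would live inside $\{1, \dots, m\}$, contradicting that $H_m$ is $F$-free. For the saturation order I would proceed in three phases. First, run the weak saturation order of $H_m$ to complete $\{1, \dots, m\}$ into a clique; since the new vertices have too small a degree to enter any copy of $F$, each of these additions creates the very copy of $F$ it created inside $K_m$. The central \emph{absorption} step is the following: once $\{1, \dots, m\}$ is a clique and a vertex $w$ is already joined to some $\delta - 1$ of its vertices, any further edge $wx$ with $x$ in the clique may be added, because placing $u$ at $w$, the neighbourhood $N_F(u)$ at $\{x\}$ together with the $\delta - 1$ old neighbours, and the remaining $s - 1 - \delta$ vertices of $F$ into the clique (possible since $m \geqslant s - 1$) yields a copy of $F$ whose only missing edge was $wx$; the clique supplies every edge not incident to $w$, and since $u$ has degree exactly $\delta$, the vertex $w$ needs no further edges.

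In the second phase I would apply this step repeatedly to join each $v_j$ to all of $\{1, \dots, m\}$, and in the third phase I would add the edges $v_j v_{j'}$: after the second phase every $v_j$ is adjacent to the whole clique, so the same embedding applies with one neighbour of $u$ placed at $v_{j'}$ and the rest of $F$ embedded in $\{1, \dots, m\}$. This proves \eqref{eq:nnn}. To deduce \eqref{up:nnn}, I would specialize to $m = s - 1$: because $|V(F)| = s > s - 1$, the graph $K_{s-1}$ contains no copy of $F$ and no edge addition can ever create one, so the only weakly $(K_{s-1}, F)$-saturated graph is $K_{s-1}$ itself and $\mathrm{wsat}(s-1, F) = \binom{s-1}{2}$. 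Substituting into \eqref{eq:nnn} and simplifying $(\delta - 1)(n - s + 1) + \binom{s-1}{2}$ gives precisely $(\delta - 1)n + \tfrac{(s-1)(s - 2\delta)}{2}$.

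I expect the main obstacle to be the bookkeeping in the absorption step rather than any conceptual difficulty: one must verify in each phase that the newly added edge genuinely completes a copy of $F$ and that no required edge of that copy is absent. The degree bound $\delta - 1$ on the new vertices is what makes both the $F$-freeness and the validity of the ordering transparent, since it forces every new vertex to play the role of the minimum-degree vertex $u$ and nothing else.
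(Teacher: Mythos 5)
Your proof is correct and is essentially the argument this theorem rests on: the paper itself gives no proof, stating the result by citation to \cite{FGJ} and remarking that the general $m$ "can be proved using a similar argument," and your construction---a weakly $(K_m,F)$-saturated core completed to a clique, which then absorbs each outside vertex through its $\delta-1$ pendant edges, followed by the edges between outside vertices---is exactly that argument worked out in full, including the correct embedding checks. The specialization $m=s-1$ with $\mathrm{wsat}(s-1,F)=\binom{s-1}{2}$ likewise matches how the paper derives \eqref{up:nnn} from \eqref{eq:nnn}.
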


The relation \eqref{eq:nnn} in   \Cref{thm:upbnd}   was proved in \cite{FGJ} for $  m\in\{|V(F)|-1, |V(F)|\} $. Above, we have formulated  a more general statement which can be proved using a similar argument. The inequality \eqref{eq:nnn}  for $ m=|V(F)|-1 $ results in an upper bound  on $ \mathrm{ wsat}(n, F) $ which is given in  \eqref{up:nnn}.

We will also need some knowledge about the presence of small subgraphs in a random graph. First, we recall the following definition.
For any graph $ G$, define
$$m(G)=\max\left\{\left.\frac{|E(H)|}{|V(H)|} \, \right| \,  H  \mbox{ is a subgraph of }  G\right\}.$$
To use later, we  recall the following result which appears in    \cite{A.F} as Theorem 5.3.

\begin{theorem}[\cite{A.F}]\label{thm:threshold}
Let $ G $ be a graph with $|E(G)|\geqslant1$. Then,  $ n^{-1/m(G)}$ is a threshold probability  for the property that $ \mathbbmsl{G}(n,p)$ has a copy of $ G$ as a subgraph.
\end{theorem}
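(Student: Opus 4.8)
The plan is to prove both directions of the threshold by pitting the first moment against the second, the balance between them being governed precisely by the quantity $m(G)$. Throughout, let $X$ count the copies of $G$ in $\mathbbmsl{G}(n,p)$, so that $\mathbbmsl{E}[X]\asymp n^{|V(G)|}p^{|E(G)|}$, where the implicit constant absorbs the automorphisms of $G$.

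For the $0$-statement --- that $p\ll n^{-1/m(G)}$ forces $\mathbbmsl{G}(n,p)$ to contain no copy of $G$ with high probability --- a single first-moment estimate on the densest subgraph suffices. Let $H\subseteq G$ attain the maximum, so that $|E(H)|/|V(H)|=m(G)$. Since every copy of $G$ contains a copy of $H$, I would bound the probability that a copy of $G$ appears by $\mathbbmsl{E}[\#\text{copies of }H]\asymp n^{|V(H)|}p^{|E(H)|}=\big(n\,p^{m(G)}\big)^{|V(H)|}$. As $p\ll n^{-1/m(G)}$ gives $n\,p^{m(G)}\to 0$, this expectation vanishes, and Markov's inequality finishes this direction.

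For the $1$-statement --- that $p\gg n^{-1/m(G)}$ forces a copy with high probability --- I would apply the second moment method through Chebyshev's inequality, $\mathbbmsl{P}[X=0]\leqslant\mathrm{Var}(X)/\mathbbmsl{E}[X]^{2}$. The variance is controlled by grouping ordered pairs of potential copies $(G_1,G_2)$ according to the isomorphism type of their shared subgraph $J=G_1\cap G_2$. Pairs with $|E(J)|=0$ contribute nothing to the covariance, since distinct edges of $\mathbbmsl{G}(n,p)$ are independent; for each nonempty type $J$, the number of contributing pairs is of order $n^{2|V(G)|-|V(J)|}$ while the probability that both copies are present is $p^{2|E(G)|-|E(J)|}$. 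Dividing by $\mathbbmsl{E}[X]^{2}\asymp n^{2|V(G)|}p^{2|E(G)|}$, the contribution of type $J$ to $\mathrm{Var}(X)/\mathbbmsl{E}[X]^{2}$ is of order $\big(n\,p^{|E(J)|/|V(J)|}\big)^{-|V(J)|}$.

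The decisive observation, and the only place the definition of $m(G)$ is used, is that $|E(J)|/|V(J)|\leqslant m(G)$ for every subgraph $J$; since $p\leqslant 1$, this yields $n\,p^{|E(J)|/|V(J)|}\geqslant n\,p^{m(G)}\to\infty$ whenever $p\gg n^{-1/m(G)}$. Hence every one of the finitely many type-$J$ contributions tends to $0$, their sum is $o(1)$, and $\mathbbmsl{P}[X=0]\to 0$. I expect the main obstacle to be the bookkeeping in this variance bound: one must count, up to the constants coming from $\mathrm{Aut}(G)$ and $\mathrm{Aut}(J)$, the ordered pairs of copies whose intersection realizes a prescribed type $J$, and confirm that these combinatorial factors do not interfere with the clean powers of $n$ and $p$ driving the estimate. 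Everything else is forced by the tension between the two moments, which $m(G)$ resolves exactly.
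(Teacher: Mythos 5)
Your proposal is correct, and it is essentially the canonical proof of this result: the paper itself states the theorem without proof, citing \cite{A.F} (Theorem 5.3 there), and the argument in that source is exactly the two-moment scheme you describe --- a first-moment bound on a densest subgraph $H$ with $|E(H)|/|V(H)|=m(G)$ for the $0$-statement, and Chebyshev with the variance decomposed over nonempty intersection types $J$, using $|E(J)|/|V(J)|\leqslant m(G)$, for the $1$-statement. There are no gaps; the bookkeeping you flag only contributes constants depending on $G$, which is harmless since the number of intersection types is finite.
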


The following lemma was proved in \cite{BMTZ}. We apply it in the next theorem.

\begin{lemma}[\cite{BMTZ}]\label{HAM}
Let  $k\geqslant 2$ be  a fixed integer  and let $p\geqslant n^{-\frac{1}{2k-1}}\log n$. Then, with high probability,  	$\mathbbmsl{G}(n, p)$   has the property that,   for every $k$-subset $S$ of vertices, the induced subgraph on  $N_{G}(S)$ contains the $(k-1)$-th power of a Hamiltonian path.
\end{lemma}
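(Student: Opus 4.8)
The plan is to fix a $k$-subset $S$ of vertices, analyse the induced subgraph on $N_G(S)$, and then take a union bound over all $\binom{n}{k}$ choices of $S$, where $G\thicksim\mathbbmsl{G}(n,p)$. The key structural observation is that whether a vertex $v\notin S$ lies in $N_G(S)$ is decided solely by the $k$ potential edges joining $v$ to $S$, and these are disjoint from the edges with both endpoints in $V(G)\setminus S$. Hence, conditioned on the set $W:=N_G(S)$, the induced subgraph $G[W]$ is distributed exactly as $\mathbbmsl{G}(|W|,p)$. Moreover $|N_G(S)|$ is a sum of $n-k$ independent indicators each of mean $p^k$, so it has mean $(n-k)p^k$, and since $np^k\geqslant n^{(k-1)/(2k-1)}\log^k n\to\infty$, a Chernoff estimate as in \Cref{lem:neighbor} gives $|N_G(S)|\geqslant\tfrac12 np^k$ with failure probability $\exp(-\Omega(np^k))$. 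This bound is super-polynomially small in $n$, so a first union bound shows that $|N_G(S)|\geqslant\tfrac12 np^k$ holds simultaneously for all $k$-subsets $S$ with high probability.

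For a fixed $S$ I would then produce the $(k-1)$-th power of a Hamiltonian path inside $G[W]\thicksim\mathbbmsl{G}(m,p)$, where $m:=|N_G(S)|\geqslant\tfrac12 np^k$. When $k\geqslant 3$ the required exponent $k-1$ is at least $2$, so \Cref{thm:Hamilton} (applied with exponent $k-1$ on $m$ vertices) yields the $(k-1)$-th power of a Hamiltonian cycle, and in particular of a Hamiltonian path, provided $p\gg m^{-1/(k-1)}$. A short computation confirms that this is exactly what the hypothesis supplies: from $m\asymp np^k$ one gets $m^{-1/(k-1)}\asymp n^{-1/(k-1)}p^{-k/(k-1)}$, so $p\gg m^{-1/(k-1)}$ is equivalent to $p^{2k-1}\gg n^{-1}$, that is to $p\gg n^{-1/(2k-1)}$, which holds since $p\geqslant n^{-1/(2k-1)}\log n$. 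The base case $k=2$ asks only for an ordinary Hamiltonian path in $\mathbbmsl{G}(m,p)$ with $m\asymp np^2$, where $p\gg(\log m)/m$, so a standard Hamiltonicity result provides it.

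The main obstacle is the final union bound over the $\binom{n}{k}\leqslant n^k$ sets $S$: to conclude that the property holds for every $S$ at once, the failure probability for each individual $S$ must be $o(n^{-k})$. The size estimate above already meets this, but \Cref{thm:Hamilton} as stated only furnishes a probability tending to $1$, which is not in itself sufficient. The crucial point is that the hypothesis places $p$ above the threshold $m^{-1/(k-1)}$ by a polylogarithmic factor; indeed the ratio equals $\log^{(2k-1)/(k-1)}n$, and this slack is precisely the room needed to upgrade the high-probability statement into a failure probability that is super-polynomially small, hence $o(n^{-k})$. I would therefore invoke a quantitative version of the power-of-Hamilton-cycle result carrying such a tail bound at densities a polylogarithmic factor above threshold, or, failing a black-box, reprove it through an absorption-plus-greedy construction whose failure probability is directly controllable; the Hamiltonicity results used in the case $k=2$ already come with exponentially small tails. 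Combining the uniform lower bound on $|N_G(S)|$ from the first paragraph with these per-set estimates and taking the union bound then yields the lemma.
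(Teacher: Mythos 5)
Your reduction is correct as far as it goes, and it is worth noting that the paper you were asked to match contains no proof of \Cref{HAM} at all: the lemma is imported wholesale from \cite{BMTZ}, so the substance you must supply is exactly the substance of that reference. The correct parts: conditioned on the edges meeting $S$, the graph induced on $W=N_G(S)$ is indeed a fresh $\mathbbmsl{G}(|W|,p)$; the Chernoff estimate gives $|N_G(S)|\geqslant np^k/2$ simultaneously for all $k$-sets, since $np^k\geqslant n^{(k-1)/(2k-1)}\log^k n$ makes the per-set failure probability $e^{-\Omega(np^k)}=n^{-\omega(1)}$; and the arithmetic showing $p\,m^{1/(k-1)}=\Theta\bigl(\log^{(2k-1)/(k-1)}n\bigr)\to\infty$ for $m\asymp np^k$ is right, so a single application of \Cref{thm:Hamilton} to $G[W]$ succeeds with probability $1-o(1)$.

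The genuine gap is the step you flag and then wave away. The union bound over $\binom{n}{k}$ sets needs a per-set failure probability $o(n^{-k})$, whereas \Cref{thm:Hamilton} delivers only an unquantified $o(1)$; your assertion that a polylogarithmic factor above the threshold ``is precisely the room needed'' to get an $n^{-\omega(1)}$ tail is not a theorem you can cite -- neither Riordan's second-moment argument nor the Kahn--Narayanan--Park spread-based proof states such a tail bound, and ``reprove it through an absorption-plus-greedy construction'' is a placeholder for the entire difficulty of the lemma, i.e.\ for what \cite{BMTZ} actually does. What is missing is a concrete mechanism, and there is a standard one that your polylog slack does support: multi-round exposure. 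Generate $G\thicksim\mathbbmsl{G}(n,p)$ as the union of $T=\lceil (k+2)\log_2 n\rceil$ independent copies of $\mathbbmsl{G}(n,p_0)$ with $1-(1-p_0)^T=p$, so $p_0\geqslant p/T$. The neighborhood $W=N_G(S)$ is determined by the full-density edges meeting $S$, while conditionally the graphs $G_i[W]$ are independent copies of $\mathbbmsl{G}(|W|,p_0)$, and each still sits above the threshold by a factor $p_0m^{1/(k-1)}=\Theta\bigl(\log^{1+1/(k-1)}n\bigr)\to\infty$ (here it is essential that your slack exponent $(2k-1)/(k-1)=2+1/(k-1)$ exceeds $1$ by more than zero polylogarithmically). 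Hence for large $n$ each round fails with probability at most $1/2$, and by monotonicity and independence the per-set failure probability is at most $2^{-T}\leqslant n^{-k-2}=o(n^{-k})$; the same sprinkling handles $k=2$, so no unproved ``exponentially small tail'' claims for Hamiltonicity are needed either. Without this (or an equivalent quantitative ingredient) your argument is incomplete at its decisive step.
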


The following theorem gives an upper bound on $ \mathrm{ wsat}( \mathbbmsl{G}(n,p), F) $ which is linear in terms of $ n$. The idea of proof comes from \cite{BMTZ}.

\begin{theorem}\label{thm:upbnd random}
Let $ F$ be a  graph with $ s$ vertices and minimum degree $ \delta\geqslant 1 $. Also, let $ m\geqslant s-1$ and $p\geqslant n^{-1/(2m+3)}\log n$. Then, $$ \mathrm{ wsat}\big( \mathbbmsl{G}(n,p), F\big)\leqslant (\delta-1)(n-m)+   \mathrm{ wsat}(m, F) $$ with high probability. In particular, for $p\geqslant n^{-1/(2s+1)}\log n$,
$$\mathrm{ wsat}\big( \mathbbmsl{G}(n,p), F\big)\leqslant (\delta-1)n+  \frac{(s-1)(s-2\delta )}{2}$$
with high probability.
\end{theorem}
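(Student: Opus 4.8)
The plan is to mirror the deterministic construction behind \Cref{thm:upbnd} inside $\mathbbmsl{G}(n,p)$, using \Cref{HAM} to supply, in the random setting, the cliques that the complete graph provides for free. Since the ``in particular'' clause is exactly the case $m=s-1$ of the first bound (there $\mathrm{wsat}(s-1,F)=\binom{s-1}{2}$, so the right-hand side of \eqref{eq:nnn} collapses to that of \eqref{up:nnn}), I would only prove the first inequality. Write $G\thicksim\mathbbmsl{G}(n,p)$. First I would fix a high-probability event on which all of the following hold at once: by \Cref{HAM} with $k=m+2$, every $(m+2)$-subset $S$ satisfies that $G[N_G(S)]$ contains the $(m+1)$-th power of a Hamiltonian path; since $\delta\leqslant s-1\leqslant m$ one checks $p\gg n^{-1/(m+1)}$, so by \Cref{thm:Hamilton} with $k=m+1$ the graph $G$ contains the $(m+1)$-th power of a Hamiltonian cycle $C$; and the common-neighbourhood bound of \Cref{lem:neighbor} holds. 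I would then relabel $V(G)=\{1,\dots,n\}$ cyclically along $C$, so that $i,j$ are adjacent in $G$ whenever $|i-j|\leqslant m+1$; in particular every window of $m+2$ consecutive labels is a clique of $G$.

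The weakly saturated graph $H$ I would use is as follows. On the core $M=\{1,\dots,m\}$, which is a clique of $G$, place a weakly $(K_m,F)$-saturated graph $H_0$ with $\mathrm{wsat}(m,F)$ edges, and join every vertex $i>m$ to its $\delta-1$ immediate predecessors $\{i-1,\dots,i-(\delta-1)\}$ along $C$ (these are edges of $G$ because $\delta-1\leqslant m+1$). Then $|E(H)|=(\delta-1)(n-m)+\mathrm{wsat}(m,F)$, exactly the target. The graph $H$ is $F$-free: $H_0$ is $F$-free, and since $\delta(F)=\delta$ while every attachment vertex $i>m$ has small degree in $H$, any copy of $F$ would be forced essentially into $M$, contradicting the $F$-freeness of $H_0$ (this step I would verify carefully, as the attachment edges overlap in a band of width $\delta-1$).

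For the saturation process I would first run the weak $(K_m,F)$-saturation of $H_0$ to turn $M$ into a complete clique, and then sweep $i=m+1,\dots,n$ while maintaining the invariant $I_{i-1}$ that every edge of $G$ inside $P:=\{1,\dots,i-1\}$ has already been activated; consequently every clique of $G$ contained in $P$ is a fully activated clique. The point of the $\delta-1$ attachment edges is that each $i$ arrives with a ready-made activated seed clique $\Sigma_i=\{i-1,\dots,i-(\delta-1)\}\subseteq P$ to which $i$ is already joined. Throughout I would map a fixed minimum-degree vertex of $F$ to $i$, so that creating a copy of $F$ upon adding an edge $ij$ requires only that $j$ lie in an activated $(s-1)$-clique $C^{\ast}\subseteq P$ in which $i$ is already joined to $\delta-1$ vertices; the remaining $s-1-\delta$ vertices of the copy are absorbed into $C^{\ast}$ for free, precisely because $C^{\ast}$ is a complete clique.

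The engine that should activate all edges from $i$ to $N_G(i)\cap P$ is the clique chain furnished by \Cref{HAM}: taking $S\supseteq\Sigma_i$ of size $m+2$, the $(m+1)$-th power Hamiltonian path on $N_G(S)$ yields overlapping $(m+2)$-cliques in the common neighbourhood, and each successive vertex of the chain is adjacent to its $m+1\geqslant\delta-1$ chain-predecessors, which is exactly enough to supply the clique $C^{\ast}$ together with the $\delta-1$ already-joined neighbours needed to append the next edge. Iterating this chain-extension, seeded by $\Sigma_i$ and kept inside $P$ so that $I_{i-1}$ keeps all the invoked cliques activated, I would grow the activated neighbourhood of $i$ until it exhausts $N_G(i)\cap P$, re-establishing $I_i$. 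I expect the \emph{main obstacle} to be precisely this coverage step: proving that the iterated common-neighbourhood clique chains, launched from the small seed $\Sigma_i$, genuinely reach every predecessor adjacent to $i$, and handling the early indices where $P$ is still too small to contain full $(s-1)$-cliques. Confirming that all the invoked high-probability events survive a single union bound, and that $H$ is indeed $F$-free, are the remaining routine points.
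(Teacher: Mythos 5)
Your preliminary reductions are fine: the ``in particular'' clause is indeed the case $m=s-1$ of the first bound, the graph $H$ (a core clique of size $m$ carrying an extremal weakly $(K_m,F)$-saturated graph plus $\delta-1$ attachment edges per remaining vertex) has the right number of edges, and your $F$-freeness check goes through (the largest-index vertex of a putative copy has at most $\delta-1$ smaller $H$-neighbours). The genuine gap is exactly the step you yourself flag as the main obstacle: activating the long-range edges, and the plan you sketch for it fails as stated. Your prefix invariant $I_{i-1}$ only makes cliques lying inside $P=\{1,\dots,i-1\}$ usable, but \Cref{HAM} produces the Hamiltonian-path power on $N_G(S)$ taken in the \emph{whole} graph; nothing allows you to keep it ``inside $P$'', and the path may run through vertices beyond $i$. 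Applying \Cref{HAM} to $G[P]$ instead is not legitimate: its hypothesis scales with the number of vertices, so for $|P|=n^{\alpha}$ with $\alpha<1$ the requirement $p\geqslant |P|^{-1/(2m+3)}\log|P|$ is strictly stronger than the assumed $p\geqslant n^{-1/(2m+3)}\log n$, and your sweep must pass through all such prefixes. Moreover, the transport problem is real, not technical: to add a far edge $ij\in E(G)$ with $j<i-m-1$, with $i$ in the minimum-degree role, you need an activated $(s-1)$-clique containing $j$ in which $\delta-1$ vertices are already joined to $i$; but every vertex joined to $i$ during the process must be a $G$-neighbour of $i$, and $i$'s $G$-neighbours outside its own cyclic window form a random set of density $p=o(1)$, so for a given far pair the expected number of them within the clique window of $j$ is $O(mp)=o(1)$, and with high probability there are many far edges $ij$ for which $i$ has no other $G$-neighbour anywhere near $j$. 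For those edges your chain seeded at $\Sigma_i$ simply has no way to arrive next to $j$.

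The missing idea — and the way the paper's proof actually works — is a single global hub in place of a vertex-by-vertex sweep. The paper fixes one clique $\Omega$ of size $m$ (present with high probability by \Cref{thm:threshold}), gives each $v\in N_G(\Omega)$ its $\delta-1$ edges into $\Omega$, and gives each $v\notin N_G[\Omega]$ its $\delta-1$ edges to the first vertices of the Hamiltonian-path power of $G[N_G(\{v\}\cup\Omega)]$ supplied by \Cref{HAM}. The process is then staged globally: complete $\Omega$; join every vertex of $N_G(\Omega)$ to all of $\Omega$ (possible since $\Omega$ is a clique of size $m\geqslant s-1$ and each such vertex already has $\delta-1$ edges into it); activate all $G$-edges inside $N_G(\Omega)$; let each far vertex $v$ walk along its own path power, which lies inside the now fully activated $N_G(\Omega)$, until $v$ is joined to all of $N_G(\{v\}\cup\Omega)$; and finally handle a far edge $xy$ by applying \Cref{HAM} to the $(m+2)$-set $\{x,y\}\cup\Omega$, which yields a clique $\Omega_{xy}\subseteq N_G(\{x,y\}\cup\Omega)$ of size at least $s-2$ that is simultaneously (i) fully activated, because it lies inside $N_G(\Omega)$, and (ii) already joined to both $x$ and $y$ by the preceding stage. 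Note that every invocation of \Cref{HAM} is on the full $n$-vertex graph with sets of size $m+1$ or $m+2$ — this is precisely where the exponent $-1/(2m+3)$ enters — so no prefix or early-index issue ever arises. This ``common neighbourhood containing the hub'' mechanism, which makes the invoked cliques adjacent to both endpoints \emph{and} guaranteed to be activated, is exactly what your seed-and-chain scheme lacks, and without it (or a substitute) your coverage step cannot be completed.
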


\begin{proof}
As the assertion trivially holds for $ F=K_2 $, we assume that $ s\geqslant 3 $.
By   \Cref{thm:threshold}, we may consider   a clique $\mathnormal{\Omega}$ of size $m$  in $G\thicksim \mathbbmsl{G}(n,p)$. We define a  spanning  subgraph $H$ of $G$ as follows. Let $ H_0 $ be a weakly $ (G[\mathnormal{\Omega}], F) $-saturated graph with $ \mathrm{ wsat}(m,F)$ edges.  The  graph  $H$ contains all edges of $H_0$ and  moreover,  for every $v\in N_G(\mathnormal{\Omega})$, we add  $\delta-1$ arbitrary edges of  $E_G(v, \mathnormal{\Omega})$ to $ H$.
Also, for every $v\in V(G)\setminus N_G[\mathnormal{\Omega}]$, we add   $\delta-1$     edges of  $E_G(v, N_G(v))$   to $H$ as described below.  Using   \Cref{HAM},  the graph $H_v=G[N_G(\{v\}\cup\mathnormal{\Omega})]$ contains  the  $(s-2)$-th power of a Hamiltonian path. Starting from  a beginning vertex, denote the vertices of $H_v$ going in the natural order induced by the Hamiltonian path  by $x^v_1, \ldots, x^v_{h_v}$, where $h_v=|V(H_v)|$. We add the  edges $vx^v_1,  \ldots,  vx^v_{\delta-1}$ to $H$ for any $v\in V(G)\setminus N_G[\mathnormal{\Omega}]$. Since $|E(H)|= (\delta-1)(n-m)+   \mathrm{ wsat}(m, F) $, it suffices to prove that $H$ is a weakly $(G, F)$-saturated graph.
To do this, we show that the edges in $ E(G)\setminus E(H)$ can be added to $ H $ in some order through a weakly $ (G, F)$-saturated process.

As $ H_0$ is weakly $ (G[\mathnormal{\Omega}], F)$-saturated, we may add the edges in $E(G[\mathnormal{\Omega}])\setminus E(H_0) $ to $H$ in an appropriate order.
By doing this, $ \mathnormal{\Omega}$ becomes a clique in $ H$.
Let $ u\in \mathnormal{\Omega} $ and
$ v\in N_G(\mathnormal{\Omega})$ such that $ uv\in E(G)\setminus E(H)$.
Let $ w\in V(F) $ such that $ d_F(w)=\delta(F)$. Since $\mathnormal{\Omega} $ is a clique in $ H$, it contains a copy $ F_0$ of $F-w $ with $ u\in V(F_0)$ and so $ H[V(F_0)\cup \{v\}]+uv$ is a copy of $ F$. This shows that all edges in $ E_G(\mathnormal{\Omega}, N_G(\mathnormal{\Omega}))\setminus E(H)$ may be added to $ H$ simultaneously.	
Now, all edges in $ E(G)\setminus E(H)$ with both endpoints in  $N_G(\mathnormal{\Omega})$ can be added to $ H$,  since they  belong to a copy of  $F$ containing  a $ (s-2)$-subset of $\mathnormal{\Omega}$. Next, for every $v \in V(G)\setminus N_G[\mathnormal{\Omega}]$, we may add to $ H$  the edges $vx^v_{\delta},  \ldots, vx^v_{h_v}$ one by one,  since every such edge belongs to a copy of $F$  containing the previous $s-2$ vertices of the $(s-2)$-th power of the Hamiltonian path. Finally, for each edge $xy\in E(G)\setminus E(H)$ with endpoints in $V(G)\setminus N_G[\mathnormal{\Omega}]$,  by   \Cref{HAM},    $N_G(\{x, y\}\cup\mathnormal{\Omega})$ contains a clique $\mathnormal{\Omega}_{xy}$ of size at least $s-2$
with  $\mathnormal{\Omega}_{xy}\subseteq N_G(\mathnormal{\Omega})$, so $xy$ can be added to $ H$ as well.
\end{proof}

The following consequence is obtained from   \Cref{cor:delta} and   \Cref{thm:upbnd random}.

\begin{corollary}
Let $ F$ be a graph with $ s $ vertices and  $ \delta(F)\geqslant 1$ and let $p\geqslant n^{-1/(2s+1)}\log n$. Then,
$\mathrm{ wsat}( \mathbbmsl{G}(n,p), F) $	is bounded from above by a constant  with high probability if and only if $ \delta(F)=1$.
\end{corollary}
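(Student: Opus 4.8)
The plan is to derive this directly from the two bounds just established, namely the lower bound in \Cref{cor:delta} and the upper bound in \Cref{thm:upbnd random}, by splitting on the value of $\delta = \delta(F)$. The only thing to verify beyond applying those results verbatim is that the single hypothesis $p \geqslant n^{-1/(2s+1)}\log n$ is strong enough to license both of them simultaneously.

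For the implication that $\delta = 1$ forces a constant bound, I would invoke the ``in particular'' clause of \Cref{thm:upbnd random}, which applies precisely under the hypothesis $p \geqslant n^{-1/(2s+1)}\log n$. Substituting $\delta = 1$ there makes the linear term $(\delta - 1)n$ vanish, leaving
$$\mathrm{ wsat}\big( \mathbbmsl{G}(n,p), F\big) \leqslant \frac{(s-1)(s-2)}{2}$$
with high probability, a quantity depending only on $F$ (through $s = |V(F)|$) and not on $n$. This settles one direction.

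For the converse, I would show that $\delta \geqslant 2$ forces the weak saturation number to grow with $n$. Here I apply \Cref{cor:delta}, which requires only $p \gg (\log n)/n$; since $n^{-1/(2s+1)} \gg n^{-1}$ for every $s \geqslant 1$ (because $1/(2s+1) < 1$), the hypothesis $p \geqslant n^{-1/(2s+1)}\log n$ indeed implies $p \gg (\log n)/n$, so the corollary is in force. It gives
$$\mathrm{ wsat}\big( \mathbbmsl{G}(n,p), F\big) \geqslant t - 1 + \frac{(\delta - 1)(n - s)}{2} \geqslant t - 1 + \frac{n - s}{2}$$
with high probability, where the last step uses $\delta - 1 \geqslant 1$. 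The right-hand side tends to infinity with $n$, so the weak saturation number cannot be bounded above by any constant.

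The argument has no genuine obstacle, since everything reduces to reading off the two preceding results under the common probability threshold. The one point worth stating explicitly is the comparison $n^{-1/(2s+1)}\log n \gg (\log n)/n$, which guarantees that \Cref{cor:delta} applies in the range covered by \Cref{thm:upbnd random}; with that in hand the two directions are immediate.
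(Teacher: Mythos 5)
Your proof is correct and matches the paper's approach exactly: the paper derives this corollary precisely by combining the lower bound of \Cref{cor:delta} (applicable since $n^{-1/(2s+1)}\log n \gg (\log n)/n$) with the ``in particular'' upper bound of \Cref{thm:upbnd random}, splitting on whether $\delta(F)=1$ or $\delta(F)\geqslant 2$. Your explicit verification of the probability-range compatibility is the only nontrivial point, and you handled it correctly.
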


We  are now  ready to prove the main result of this section. Recall \Cref{thm:Gnp=Kn}.

\GnpKn*

\begin{proof}
For simplicity,  let $\delta=\delta(F)$.
By   \Cref{thm:alon}, 	there exist a constant $c_F$ and  a function  $\varphi=o(n)$   such that  $ \mathrm{ wsat}(n, F)=c_Fn+\varphi(n) $  for all $ n$. By the assumption and    \Cref{thm:upbnd}, we have
$$(\delta-1)n+D	\leqslant \mathrm{ wsat}(n, F)\leqslant (\delta-1)n+  \frac{(s-1)(s-2\delta )}{2},$$
where $ s=|V(F)| $.
Since  $ \varphi(n)=o(n) $, the inequality above implies that $ c_F=\delta-1 $ and $ \varphi(n) \geqslant D$ for any $ n$. By   \Cref{thm:upbnd}, for any $ n\geqslant m\geqslant s-1 $,
\begin{align*}
(\delta -1)n+\varphi(n)&= \mathrm{ wsat}(n, F)\\
&\leqslant (\delta-1)(n-m)+   \mathrm{ wsat}(m, F)\\
&=(\delta-1)(n-m)+ \big((\delta -1)m+\varphi(m)\big)\\
&=(\delta -1)n+\varphi(m)
\end{align*}
and so $ \varphi(n)\leqslant \varphi(m) $. This shows that $ \varphi $ is decreasing. As $ \varphi $ is an integer valued function and bounded  from  below, there exists a constant $ d_F $ such that $ \varphi(n)=d_F $ for any  sufficiently large $n$. Assume that  $ k $ is the smallest $ n $ satisfying $ n\geqslant s-1 $ and $ \varphi(n)=d_F $.

Let $ p\geqslant n^{-1/(2k+3)}\log n $. \Cref{cor:wsatwsat} yields that $  \mathrm{ wsat}( \mathbbmsl{G}(n, p), F) \geqslant  \mathrm{ wsat}(n, F) $ with high probability. Moreover,  \Cref{thm:upbnd random} implies that
$$\mathrm{ wsat}\big( \mathbbmsl{G}(n,p), F\big)\leqslant (\delta-1)(n-k)+  \mathrm{ wsat}(k, F),$$
with high probability. Therefore,
\begin{align*}
(\delta -1)n+d_F&= \mathrm{ wsat}(n, F)\\
&\leqslant  \mathrm{ wsat}\big( \mathbbmsl{G}(n, p), F\big)\\
&\leqslant (\delta-1)(n-k)+   \mathrm{ wsat}(k, F)\\
&=(\delta-1)(n-k)+ \big((\delta -1)k+d_F\big)\\
&=(\delta -1)n+d_F,
\end{align*}
and hence $  \mathrm{ wsat}( \mathbbmsl{G}(n,p), F)=(\delta -1)n+d_F= \mathrm{ wsat}(n, F) $ with high probability, as required.
\end{proof}

The following result shows that some graphs  with minimum degree $  2 $ satisfy the assumption of  \Cref{thm:Gnp=Kn}.

\begin{lemma}\label{lem:delta2}
Let $ F $ be a graph with $\delta(F)= 2 $. If $ F$ either is  connected or has no cut edge, then $ \mathrm{ wsat}(n, F)\geqslant n-1$ for any positive integer $ n $.
\end{lemma}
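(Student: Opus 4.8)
The plan is to bound the edges of an arbitrary weakly $(K_n,F)$-saturated graph $H$ by controlling its component structure. Write $c$ for the number of connected components of $H$ and $\mu(H)=|E(H)|-n+c$ for its cyclomatic number; since $|E(H)|=n-c+\mu(H)$, it suffices to prove $\mu(H)\geqslant c-1$, and because $\mu(H)$ is at least the number of components of $H$ that contain a cycle, it is enough to show that at most one component of $H$ is acyclic. The argument follows the weak saturation process that transforms $H$ into $K_n$, tracking which \emph{super-components} (components of the current graph) contain a cycle; I call such a super-component \emph{heavy} and the others \emph{light}.

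The first point concerns any edge $xy$ whose addition merges two distinct super-components. By definition $xy$ lies in a newly created copy $\Phi$ of $F$; were the corresponding edge $ab$ of $F$ not a cut edge, then $\Phi-xy$ would be connected and already present, so $x$ and $y$ would already lie in one super-component, a contradiction. Hence $ab$ is a cut edge of $F$, and writing $F-ab=F_a\sqcup F_b$, the two connected pieces $\Phi_x\cong F_a$ and $\Phi_y\cong F_b$ of $\Phi-xy$ lie, respectively, inside the super-components of $x$ and of $y$ before the merge. This is where $\delta(F)=2$ is used: in $F_a$ every vertex other than $a$ keeps degree $\geqslant 2$ while $a$ has degree $\geqslant 1$, so $|E(F_a)|\geqslant |V(F_a)|$ and $F_a$ contains a cycle, and likewise $F_b$. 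Thus every merge joins two heavy super-components. If $F$ has no cut edge — which covers the ``no cut edge'' hypothesis and also the case of connected $F$ with no bridge — no merging edge can ever be created, so the component count cannot decrease; since the process ends at the connected graph $K_n$, we get $c=1$ and $|E(H)|\geqslant n-1$.

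It remains to treat connected $F$ having a cut edge. A connected graph with $\delta=2$ and $\mu(F)=1$ is a single cycle, which has no cut edge, so the presence of a cut edge forces $\mu(F)\geqslant 2$; hence $F$ minus any one edge still contains a cycle. This yields the second point: a within-component edge $uv$ can be added only to a heavy super-component. Indeed, adding $uv$ inside a super-component $\mathcal S$ creates a copy $\Phi'\cong F$; as $F$ is connected, $\Phi'$ lies in $\mathcal S$, and $\Phi'-uv\subseteq\mathcal S$ contains a cycle, so $\mathcal S$ was already heavy. Combining the two points, at no step can a currently light super-component be touched — it can neither absorb a within-edge nor participate in a merge — so a light super-component stays unchanged and light forever. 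In particular an acyclic component of $H$ is light at the start and can never be connected to the rest. If $c\geqslant 2$ this contradicts the process reaching $K_n$, so every component of $H$ is heavy, whence $\mu(H)\geqslant c$ and $|E(H)|\geqslant n$; and if $c=1$ then $|E(H)|\geqslant n-1$ directly. Either way $\mathrm{wsat}(n,F)\geqslant n-1$. (For the degenerate range $n<|V(F)|$ there is no copy of $F$ in $K_n$, so $H=K_n$ and the bound is trivial.)

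The main obstacle is the bookkeeping of the heavy/light status across the process, and in particular the structural claim that a cut edge of $F$ splits it into two pieces \emph{each} containing a cycle; this is precisely the step that fails without $\delta(F)=2$ (a pendant piece of a bridge would be a single low-degree vertex), and it must be phrased so as to rule out the degenerate case in which one side of the cut edge is too small to carry a cycle, which the hypothesis $\delta(F)=2$ exactly prevents.
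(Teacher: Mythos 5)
Your proof is correct. It shares the paper's skeleton: both rest on the observation that an edge whose addition merges two components of the current graph must correspond to a cut edge of $F$ (so when $F$ has no cut edge, $H$ must be connected and the bound is immediate), and both then reduce to the case of connected $F$ with a cut edge. Where you diverge is in how that remaining case is handled. The paper argues by contradiction on a hypothetical tree component $T$ of $H$, examining the first edge of the process with an endpoint in $V(T)$: a crossing edge yields a copy of $F$ whose intersection with $T$ is a forest and hence forces a vertex of degree at most $1$ in that copy, while an edge internal to $T$ forces the copy to lie in $T+uv$, which has only one cycle, making $F$ a cycle graph and contradicting the cut edge. You instead run a process-wide invariant: the degree count on the two sides of the cut edge shows every merge joins two cycle-containing (``heavy'') components, and the fact that a connected graph with $\delta=2$ and a cut edge has cyclomatic number at least $2$ shows internal additions require a heavy host; hence acyclic components are frozen for the entire process. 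The underlying degree/cycle facts are the same in both proofs, but your invariant formulation is somewhat more informative --- it shows both sides of \emph{any} merge carry cycles and yields $|E(H)|\geqslant n$ whenever $H$ is disconnected --- at the cost of the cyclomatic-number bookkeeping, whereas the paper's first-bad-edge contradiction is more economical. Your handling of the degenerate range $n<|V(F)|$ and of the possible single-vertex side of a cut edge (excluded precisely by $\delta(F)=2$) is also sound.
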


\begin{proof}
Let $ H $ be a weakly $(K_n, F)$-saturated graph with minimum possible number of edges. If $ H $ is connected, then
$\mathrm{ wsat}(n, F)= |E(H)|\geqslant n-1$, as required. So, let $ H $ be  disconnected. Then, the first edge in $ E(K_n)\setminus E(H) $  joining two connected components of $ H $ in a  weakly $ F$-saturated process  is  a cut edge of a copy of $ F$.
Hence, in order to prove the assertion, we may assume that $ F$ is a connected graph with a cut edge.

We prove that $  \mathrm{ wsat}(n, F)\geqslant n$ for any $ n\geqslant 3 $.
To do this,  we show that none of the connected components of $ H $ is a tree. By contradiction, suppose that $H  $ has a connected component $ T $ that is a tree. Assume that $ uv\in E(K_n)\setminus E(H) $ is the first edge in the order of weakly $ F$-saturated process which has an endpoint in $ V(T) $. First, suppose that $ u\in V(T) $ and $ v\notin V(T) $. Let $ F' $ be a copy of $ F $ created by adding $ uv $ during the process. Since the induced subgraph of $ F' $ on $ V(F')\cap V(T) $ is a forest, either it is $ K_1 $ or it has at least two vertices of degree at most $ 1 $. This implies that $ \delta(F)\leqslant1$, a contradiction.  Next, suppose that $ u, v\in V(T)$.
Since $ T $ is a tree and $ F$ is connected, $ F $ has exactly one cycle. As $ \delta(F)=2 $, $ F $ must be a cycle graph, a contradiction.
\end{proof}

\begin{remark}
We give an example of a graph $ F$ with minimum degree $2$ such that $ \mathrm{ wsat}(n, F)<n-1$.
Let $ F=K_3\cup D_3$, where $ D_3$ is a graph consisting of two vertex disjoint triangles which are joined by a single edge. Letting $ n\geqslant 15$ and $n\equiv0 \pmod 3$,  it is easy to
check that $ D_3\cup \tfrac{n-6}{3}P_3$ is  a weakly $F$-saturated graph which implies that  $ \mathrm{ wsat}(n, F)\leqslant \tfrac{2n}{3}+3$.
\end{remark}

\begin{remark}\label{rem:stability1}
It is worth mentioning that the assumption $  \mathrm{ wsat}(n, F)\geqslant (\delta-1)n+D $ for a constant $D$, given in   \Cref{thm:Gnp=Kn}, holds for several graph families. These include complete graphs \cite{Lovasz}, complete bipartite graphs \cite[Proposition 15]{Kronenberg}, graphs with minimum degree $ 1 $  (\Cref{thm:lowbnd}), and graphs with minimum degree $ 2 $ which either are  connected or
have no cut edge (\Cref{lem:delta2}). So,   \Cref{thm:Gnp=Kn}
can be applied for these graph families.
\end{remark}

\section{Small $\boldsymbol{p}$}\label{S.4}

For any  graph $ F$, if $p$ is small enough, then  $ \mathbbmsl{G}(n,p) $ has no copy of $ F$ as a subgraph with high probability. Hence, for such $ p$,   with high probability,  $ \mathbbmsl{G}(n,p) $ is the
unique weakly $(\mathbbmsl{G}(n,p), F)$-saturated graph which means that $ \mathrm{ wsat}(\mathbbmsl{G}(n,p), F)=e(\mathbbmsl{G}(n,p)) $.
In this section, we derive a similar result for slightly bigger $p$.

For any graph $ G$, define
$$\mu(G)=\max\left \{m(G), \frac{|E(G)|-1}{|V(G)|-2}\right\}$$
if $ |V(G)|\neq 2$ and otherwise, define $ \mu(G)$ to be equal to $m(G)$.
Below, we state  a useful observation which can be proved straightforwardly.

\begin{observation}\label{prop:small}
For every two graphs $ G $ and $ F$, let $ X_F(G)$ denote the number of copies of $ F$ in $ G$. Then,
$$|E(G)|-X_F(G)\leqslant \mathrm{ wsat}(G, F)\leqslant |E(G)|.$$
\end{observation}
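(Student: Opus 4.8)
The plan is to establish the two inequalities in \Cref{prop:small} separately, both of which should follow from nearly direct structural observations about the weak saturation process. The upper bound $\mathrm{wsat}(G,F)\leqslant|E(G)|$ is immediate: the empty spanning subgraph of $G$ (or indeed $G$ itself when $G$ contains no copy of $F$) gives a trivial bound, but more directly, $G$ itself is a weakly $(G,F)$-saturated graph only when it is $F$-free; in general one simply notes that a weakly $(G,F)$-saturated graph is by definition a spanning subgraph of $G$, so its edge count is at most $|E(G)|$. Hence the minimum over all such graphs is certainly at most $|E(G)|$, provided at least one weakly $(G,F)$-saturated graph exists. (One should remark that if $G$ has no copy of $F$ at all, then $G$ itself is vacuously weakly $(G,F)$-saturated and $\mathrm{wsat}(G,F)=|E(G)|$, which is consistent.)

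For the lower bound $|E(G)|-X_F(G)\leqslant\mathrm{wsat}(G,F)$, I would argue as follows. Let $H$ be a weakly $(G,F)$-saturated graph achieving the minimum, and let $e_1,e_2,\ldots,e_r$ be the ordering of the edges in $E(G)\setminus E(H)$ witnessing the saturation process, where $r=|E(G)|-|E(H)|=|E(G)|-\mathrm{wsat}(G,F)$. By definition, when each $e_i$ is added, it lies in a newly formed copy of $F$ in the current graph, which is a subgraph of $G$. The key point is that each such copy of $F$ is in particular a copy of $F$ present in $G$ itself, and I want to assign to each $e_i$ a distinct copy of $F$ in $G$. Since $e_i$ is contained in a copy $F_i$ of $F$ at step $i$, and this copy uses only edges from $E(H)\cup\{e_1,\ldots,e_i\}$, the copy $F_i$ certainly appears in $G$.

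The remaining step, and the main obstacle to keep straight, is to show these copies $F_1,\ldots,F_r$ can be taken distinct, so that $r\leqslant X_F(G)$. I would use the fact that $F_i$ contains the edge $e_i$, while $e_i\notin E(H)\cup\{e_1,\ldots,e_{i-1}\}$; thus $F_i$ uses an edge not present before step $i$. If $F_i=F_j$ as subgraphs for $i<j$, then this common copy contains $e_j$, but $e_j$ was added only at step $j>i$, so $e_j\notin E(H)\cup\{e_1,\ldots,e_i\}$, contradicting that $F_i$ is a subgraph of the graph after step $i$. Hence the map $i\mapsto F_i$ is injective into the set of copies of $F$ in $G$, giving $r\leqslant X_F(G)$, i.e.\ $|E(G)|-\mathrm{wsat}(G,F)\leqslant X_F(G)$, which rearranges to the claimed lower bound.

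I expect the only subtlety to be the injectivity argument, where one must be careful that ``new copy of $F$ containing $e_i$'' really does force the copies to differ across steps; the cleanest formulation is the one above, keying each copy to the latest-added edge it contains. Everything else is bookkeeping, so I would present the proof compactly, emphasizing the injection $e_i\mapsto F_i$ and the observation that a copy of $F$ persisting through the process cannot contain an edge added strictly later than the step at which it first appears.
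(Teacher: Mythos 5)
Your lower-bound argument is correct, and since the paper states this observation without proof (calling it straightforward), there is no official proof to compare against; yours is the natural one. The key step --- assigning to each added edge $e_i$ a copy $F_i$ of $F$ inside the graph with edge set $E(H)\cup\{e_1,\ldots,e_i\}$, and noting that $F_i=F_j$ with $i<j$ is impossible because $F_j$ contains $e_j$ while $e_j\notin E(H)\cup\{e_1,\ldots,e_i\}$ --- is exactly right, and it yields the injection showing $|E(G)|-\mathrm{wsat}(G,F)\leqslant X_F(G)$.

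The gap is in the upper bound, and you flagged it yourself without closing it: your argument gives $\mathrm{wsat}(G,F)\leqslant|E(G)|$ only \emph{provided} some weakly $(G,F)$-saturated graph exists, and you verify existence only when $G$ is $F$-free. Your fallback claim that the empty spanning subgraph ``gives a trivial bound'' is false: for $G=F=K_3$, the empty graph is not weakly $(K_3,K_3)$-saturated, since the first added edge lies in no triangle. (And when $G$ contains a copy of $F$, you cannot take $H=G$ either, since a weakly saturated graph must be $F$-free.) Existence does always hold when $F$ has at least one edge, but it needs an argument: starting from $G$, repeatedly delete an edge lying in some copy of $F$ in the current graph, until the remaining graph $H$ is $F$-free; this terminates because the edgeless graph is $F$-free. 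If $f_1,\ldots,f_r$ are the deleted edges in order of deletion, then $H$ together with the reversed ordering $f_r,f_{r-1},\ldots,f_1$ is weakly $(G,F)$-saturated, because re-adding $f_i$ restores precisely the graph from which $f_i$ was deleted, and that graph contains a copy of $F$ through $f_i$. Adding these two sentences makes your proof complete.
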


\begin{theorem}\label{thm:maxDel}
Let $ F$ be a graph with $ \delta(F)\geqslant 1$. Then,  for any $ p\ll n^{-1/\mu(F)}$,
$$\mathrm{ wsat}\big(\mathbbmsl{G}(n,p), F\big)=e\big(\mathbbmsl{G}(n,p)\big)\big(1+o(1)\big)$$ with high probability.
\end{theorem}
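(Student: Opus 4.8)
The plan is to combine Observation~\ref{prop:small} with the threshold result of Theorem~\ref{thm:threshold} and a second-moment estimate on the number of copies of $F$. Observation~\ref{prop:small} already gives the upper bound $\mathrm{wsat}(\mathbbmsl{G}(n,p),F)\leqslant e(\mathbbmsl{G}(n,p))$ for free, so the whole task reduces to showing the matching lower bound
$$\mathrm{wsat}\big(\mathbbmsl{G}(n,p),F\big)\geqslant e\big(\mathbbmsl{G}(n,p)\big)\big(1+o(1)\big)$$
with high probability. By the lower inequality in Observation~\ref{prop:small}, $\mathrm{wsat}(\mathbbmsl{G}(n,p),F)\geqslant e(\mathbbmsl{G}(n,p))-X_F(\mathbbmsl{G}(n,p))$, so it suffices to prove that $X_F(\mathbbmsl{G}(n,p))=o\big(e(\mathbbmsl{G}(n,p))\big)$ with high probability. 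Since $p\gg n^{-2}$ (indeed $p\ll n^{-1/\mu(F)}$ but $p$ is not vanishingly tiny in the relevant regime; and if $p$ is so small that $e(\mathbbmsl{G}(n,p))=0$ the statement is vacuous), one expects $e(\mathbbmsl{G}(n,p))=\binom{n}{2}p(1+o(1))=\mathnormal{\Theta}(n^2p)$ with high probability by a routine concentration argument (Chernoff, as already invoked for Lemma~\ref{lem:neighbor}). So the crux is to control $X_F(\mathbbmsl{G}(n,p))$.

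First I would compute $\mathbbmsl{E}[X_F(\mathbbmsl{G}(n,p))]$: up to a constant depending only on $F$, this equals $n^{|V(F)|}p^{|E(F)|}$. The point of the exponent $\mu(F)$ is that $p\ll n^{-1/\mu(F)}$ forces $X_F$ to be asymptotically negligible \emph{relative to} $n^2p$, not merely relative to itself. Precisely, the condition $p\ll n^{-1/\mu(F)}$ with $\mu(F)\geqslant (|E(F)|-1)/(|V(F)|-2)$ should yield $n^{|V(F)|}p^{|E(F)|}=o(n^2p)$; I would verify this by checking the inequality $|V(F)|-2 < (|E(F)|-1)\cdot\tfrac{1}{\mu(F)}$ rearranges correctly, using the definition $\mu(F)=\max\{m(F),(|E(F)|-1)/(|V(F)|-2)\}$ together with the fact that $F$ has $\delta(F)\geqslant 1$ so $F\neq K_2$ contributes no degenerate case (and $F=K_2$ is trivial since $\mathrm{wsat}(G,K_2)=0$). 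The role of the $m(F)$ term in the maximum defining $\mu(F)$ is to ensure, via Theorem~\ref{thm:threshold}, that copies of $F$ are rare enough; the role of the $(|E(F)|-1)/(|V(F)|-2)$ term is precisely to guarantee the comparison against the edge count.

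To pass from the expectation bound to a high-probability bound, I would apply Markov's inequality to $X_F$: since $\mathbbmsl{E}[X_F]=o(n^2p)$ and $e(\mathbbmsl{G}(n,p))=\mathnormal{\Theta}(n^2p)$ with high probability, Markov gives $X_F=o(n^2p)$ with high probability, and hence $X_F=o\big(e(\mathbbmsl{G}(n,p))\big)$ with high probability. Combining, $e(\mathbbmsl{G}(n,p))-X_F=e(\mathbbmsl{G}(n,p))(1-o(1))$ with high probability, which is the desired lower bound. The main obstacle I anticipate is the bookkeeping in the exponent comparison: one must be careful that the relevant subgraph of $F$ governing the threshold (the densest subgraph, giving $m(F)$) does not interfere with the edge-count comparison, and that the precise form of $\mu(F)$ handles all subgraphs uniformly rather than only $F$ itself. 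A clean way to handle this is to note that any copy of $F$ creates at least $|E(F)|-1\geqslant 1$ edges that lie in a copy, but the counting must be done over all of $F$, so one should bound $X_F$ directly by its expectation rather than decomposing into subgraphs; the subgraph condition $m(F)$ only enters to guarantee that $\mathbbmsl{G}(n,p)$ has few copies at all. I would double-check the boundary case $|V(F)|=2$ (where $\mu(F)=m(F)$) separately, though it coincides with $F=K_2$ and is trivial.
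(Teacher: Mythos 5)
Your overall architecture (the two-sided bound of \Cref{prop:small}, a first-moment estimate for $X_F$, \Cref{thm:threshold} held in reserve, and Markov's inequality) is the same as the paper's, but the single inequality on which your proof rests is stated backwards, and fixing it forces a case split that your write-up explicitly refuses to make. Write $s=|V(F)|$ and $t=|E(F)|$. You claim that $p\ll n^{-1/\mu(F)}$ together with $\mu(F)\geqslant (t-1)/(s-2)$ yields $n^sp^t=o(n^2p)$, and you propose to verify this via the inequality $s-2<(t-1)/\mu(F)$. But the definition of $\mu(F)$ gives exactly the opposite, $(t-1)/\mu(F)\leqslant s-2$; the comparison $n^{s-2}p^{t-1}=o(1)$ is salvageable only in the equality case $\mu(F)=(t-1)/(s-2)$, where the little-$o$ slack in $p\ll n^{-1/\mu(F)}$ supplies the vanishing factor (this is the paper's computation, writing $p\leqslant n^{-1/\mu(F)}\omega(n)$ and using $t\geqslant 2$). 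When instead $\mu(F)=m(F)>(t-1)/(s-2)$, the expectation of $X_F$ is simply not $o(n^2p)$: take $F$ to be $K_5$ with ten pendant vertices attached, so that $s=15$, $t=20$, and $m(F)=2>19/13$, hence $\mu(F)=2$; for $p=n^{-1/2}(\log n)^{-1}\ll n^{-1/\mu(F)}$ one has $\mathbbmsl{E}(X_F)=\Theta\big(n^{5}(\log n)^{-20}\big)$ while $n^2p=n^{3/2}(\log n)^{-1}$, so the first moment dwarfs the edge count (even though in fact $X_F=0$ with high probability, since there is no copy of $K_5$ at all --- the expectation is inflated by rare events). In this case the proof must abandon the expectation of $X_F$ entirely and argue, as the paper does, that $\mu(F)=m(F)$ means $p\ll n^{-1/m(F)}$, whence $X_F=0$ with high probability by \Cref{thm:threshold} and $\mathrm{wsat}(\mathbbmsl{G}(n,p),F)=e(\mathbbmsl{G}(n,p))$ exactly. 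Your text gestures at this role of $m(F)$ (``copies of $F$ are rare enough'') but then insists that ``one should bound $X_F$ directly by its expectation rather than decomposing into subgraphs'', which is precisely the step that fails for every $F$ whose densest subgraph is strictly denser than $(t-1)/(s-2)$.

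The second gap is the concentration of $e(\mathbbmsl{G}(n,p))$. Your parenthetical dismissal covers only the range $n^2p\to 0$ (where indeed $e(\mathbbmsl{G}(n,p))=0$ with high probability and the claim is trivial), but the theorem also allows $p$ with $n^2p=\Theta(1)$, where $e(\mathbbmsl{G}(n,p))$ is Poisson-like, your assertion $e(\mathbbmsl{G}(n,p))=\Theta(n^2p)$ with high probability is false, and the Markov comparison has no deterministic quantity to compare against. The paper closes this regime with one more appeal to \Cref{thm:threshold}: after setting aside the matching case $\mathnormal{\Delta}(F)=1$, one has $m(F)\geqslant 2/3$, so for all $p\leqslant n^{-7/4}$ there is no copy of $F$ whatsoever with high probability and again $\mathrm{wsat}=e(\mathbbmsl{G}(n,p))$ exactly; only for $p\geqslant n^{-7/4}$, where $n^2p\to\infty$ and Chernoff applies, does it run the expectation argument. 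Your Markov step itself is sound --- it is the informal version of the paper's device of thresholding $X_F$ at $\mathbbmsl{E}(X_F)/\sqrt{\omega(n)}$ --- but without the two case splits above the proposed proof is incorrect for a large family of graphs $F$ and incomplete in the sparse range of $p$.
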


\begin{proof}
For  simplicity, let $ s=|V(F)|$, $t=|E(F)| $, $ m=m(F) $, and $ \mu=\mu(F)$.	
If $ \mathnormal{\Delta}(F)=1$, then $ F=tK_2$ and thus    $ e(\mathbbmsl{G}(n,p))=0 $ with high probability using    \Cref{thm:threshold}, so there is nothing to prove. Hence, we assume that $ \mathnormal{\Delta}(F)\geqslant 2$ which gives $ m(F)\geqslant\tfrac{2}{3}$.			
Denote by  $ X_F$   the random variable that counts the number of copies of $ F$ in $ \mathbbmsl{G}(n,p)$.
Let $ p\leqslant n^{-1/\mu}\omega(n)$, where $\omega(n)\rightarrow0$ when $n\rightarrow\infty$.
From    \Cref{prop:small},  $$e\big(\mathbbmsl{G}(n,p)\big)-X_F\leqslant \mathrm{ wsat}\big(\mathbbmsl{G}(n,p), F\big)\leqslant e\big(\mathbbmsl{G}(n,p)\big)$$ and therefore
it is enough to show that $ X_F=o(e(\mathbbmsl{G}(n,p)))$.
If $ \mu \leqslant m$, then it follows from   \Cref{thm:threshold} that $ X_F=0$, we are done.
If $ p\leqslant n^{-7/4}$, then $ p\ll n^{-3/2}\leqslant n^{-1/m}$ and hence $ X_F=0$    using   \Cref{thm:threshold}, again we are done.
So, we may assume that $ \mu=\tfrac{t-1}{s-2} $ and $ p\geqslant n^{-7/4}$.
Since $ p\gg n^{-2}$, it follows from  \cite[Theorem 4.4.4]{Alon.Book} that
$ e(\mathbbmsl{G}(n,p))= n^2p/2(1+o(1))$ with high probability.  We know from   \cite[Lemma 5.1]{A.F} that
$$\mathbbmsl{E}(X_F)=\frac{s!}{|Aut(F)|}\binom{n}{s}p^t,$$
where $ Aut(F)$ is the automorphism group of $ F $.
Also, using the  Markov  bound  \cite[Lemma 22.1]{A.F},
$$\mathbbmsl{P}\left[X_F\geqslant \frac{\mathbbmsl{E}(X_F)}{\sqrt{\omega(n)}}\right]\leqslant \sqrt{\omega(n)}\longrightarrow 0$$
which implies that $ X_F\leqslant \mathbbmsl{E}(X_F)/\sqrt{\omega(n)}$ with high probability.
Now,  since $\mathbbmsl{E}(X_F)\leqslant  n^{s}p^t$,  $e(\mathbbmsl{G}(n,p))\geqslant n^2p/3$ with high probability, and $ \mu=\tfrac{t-1}{s-2}$, we  obtain  that  with high probability
\begin{align*}
X_F&\leqslant \frac{\mathbbmsl{E}(X_F)}{\sqrt{\omega(n)}}\\
&\leqslant   \frac{n^sp^t}{\sqrt{\omega(n)}}\\
&\leqslant \frac{3n^{s-2}p^{t-1}}{\sqrt{\omega(n)}}e\big(\mathbbmsl{G}(n,p)\big)\\
& \leqslant \frac{3n^{s-2}}{\sqrt{\omega(n)}}\left(n^{-\frac{1}{\mu}}\omega(n)\right)^{t-1}e\big(\mathbbmsl{G}(n,p)\big)\\
& \leqslant 3\big(\omega(n)\big)^{t-\frac{3}{2}}e\big(\mathbbmsl{G}(n,p)\big)
\end{align*}
which means  that $ X_F=o(e(\mathbbmsl{G}(n,p)))$ 	as  $ t\geqslant 2$.  The proof is complete.
\end{proof}

\begin{corollary}
Let $ F$ be a graph with $ \mu(F) \geqslant2$  and let $p=\tfrac{2c_F+o(1)}{n}$, where $ c_F$ is introduced in     \Cref{thm:alon}. Then,
$\mathrm{ wsat}(\mathbbmsl{G}(n,p), F)=\mathrm{ wsat}(n, F)(1+o(1))$ with high probability.
\end{corollary}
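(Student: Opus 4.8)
The plan is to obtain the statement as a short deduction from the two quantitative results already in hand. On one side, \Cref{thm:maxDel} identifies $\mathrm{wsat}(\mathbbmsl{G}(n,p),F)$ with the edge count $e(\mathbbmsl{G}(n,p))$ up to a factor $1+o(1)$ whenever $p\ll n^{-1/\mu(F)}$; on the other, \Cref{thm:alon} gives $\mathrm{wsat}(n,F)=(c_F+o(1))n$. The bridge between them is that the prescribed density $p=(2c_F+o(1))/n$ is engineered exactly so that the expected number of edges $\binom{n}{2}p$ is asymptotic to $c_F n$, and hence to $\mathrm{wsat}(n,F)$.

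First I would verify that the hypothesis of \Cref{thm:maxDel} is satisfied. Since $\mu(F)\geqslant 2$, we have $n^{-1/\mu(F)}\geqslant n^{-1/2}$, while $p=O(1/n)$; hence $p/n^{-1/\mu(F)}\leqslant n^{-1}/n^{-1/2}=n^{-1/2}\to 0$, so that $p\ll n^{-1/\mu(F)}$. \Cref{thm:maxDel} then gives
\[
\mathrm{wsat}\big(\mathbbmsl{G}(n,p),F\big)=e\big(\mathbbmsl{G}(n,p)\big)\big(1+o(1)\big)
\]
with high probability. Next I would pin down the edge count: as $p\gg n^{-2}$, the concentration bound \cite[Theorem 4.4.4]{Alon.Book} already used in the proof of \Cref{thm:maxDel} yields $e(\mathbbmsl{G}(n,p))=\binom{n}{2}p\,(1+o(1))$ with high probability, and
\[
\binom{n}{2}p=\frac{n(n-1)}{2}\cdot\frac{2c_F+o(1)}{n}=\big(c_F+o(1)\big)n,
\]
which by \Cref{thm:alon} equals $\mathrm{wsat}(n,F)(1+o(1))$. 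Chaining the two displays yields $\mathrm{wsat}(\mathbbmsl{G}(n,p),F)=\mathrm{wsat}(n,F)(1+o(1))$ with high probability.

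The one delicate point, and the step I would treat most carefully, is the bookkeeping of the several $o(1)$ terms in the final chain. Passing from the additive estimate $e(\mathbbmsl{G}(n,p))=(c_F+o(1))n$ to a multiplicative comparison with $\mathrm{wsat}(n,F)$ is legitimate precisely because $c_F>0$, so that $\mathrm{wsat}(n,F)\sim c_F n$ grows linearly and absorbs the additive $o(n)$ errors into a single $1+o(1)$ factor; this is also what guarantees $p\gg n^{-2}$ above. The positivity is automatic in the regime of genuine interest, since when $\delta(F)\geqslant 2$ the lower bound of \Cref{thm:lowbnd} forces $c_F\geqslant(\delta(F)-1)/2>0$. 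Beyond this arithmetic I anticipate no substantive obstacle, as both of the heavy inputs are already established.
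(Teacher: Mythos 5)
Your proof is correct and follows essentially the same route as the paper's: apply \Cref{thm:maxDel}, concentrate $e\big(\mathbbmsl{G}(n,p)\big)$ around $n^2p/2$ via the cited bound from \cite[Theorem 4.4.4]{Alon.Book}, substitute $p=(2c_F+o(1))/n$, and invoke \Cref{thm:alon}. Your explicit check that $\mu(F)\geqslant 2$ yields $p\ll n^{-1/\mu(F)}$, and your remark that positivity of $c_F$ underlies the assertion $p\gg n^{-2}$, merely spell out details the paper leaves implicit.
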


\begin{proof}
As $p\gg n^{-2}$, it follows from \cite[Theorem 4.4.4]{Alon.Book},   \Cref{thm:alon}, and   \Cref{thm:maxDel} that
\begin{align*}\mathrm{wsat}\big(\mathbbmsl{G}(n,p), F\big)&=e\big(\mathbbmsl{G}(n,p)\big)\big(1+o(1)\big)\\&=\frac{n^2p}{2}\big(1+o(1)\big)\\&=\big(c_F+o(1)\big)n\\&=\mathrm{wsat}(n, F)\big(1+o(1)\big)\end{align*}
with high probability.
\end{proof}

\begin{remark}
It has been proved in \cite{BMTZ} that for any fixed integer $ s\geqslant 3$, there exists  a positive  constant  $ \lambda$ such that
\begin{align}\label{up.Max}
\mathrm{wsat}\big(\mathbbmsl{G}(n, p), K_s\big)\leqslant (s-2)n+\frac{ \log^{\lambda}n}{p^{2s-3}}
\end{align}
with high probability. Let $ F$ be a graph on $ s$ vertices with $ \delta(F)\geqslant 1$. Since $\mathrm{wsat}(\mathbbmsl{G}(n, p), F)\leqslant \mathrm{wsat}(\mathbbmsl{G}(n, p), K_s)$, we find that the upper
bound given in  \eqref{up.Max} also holds for $\mathrm{wsat}(\mathbbmsl{G}(n, p), F) $. Using this and our results in the current  paper, we deduce that the following holds for $\mathrm{wsat}(\mathbbmsl{G}(n, p), F)$ with
high probability.
\begin{itemize}[leftmargin=*]
\item If $ p\ll n^{-1/\mu(F)}$, then  $ \mathrm{ wsat}(\mathbbmsl{G}(n,p), F)=e(\mathbbmsl{G}(n,p))(1+o(1)) $.
\item  If $ n^{-1/\mu(F)}\leqslant p \leqslant n^{-1/(s-1)} $, then $\mathrm{ wsat}(\mathbbmsl{G}(n,p), F)\leqslant e(\mathbbmsl{G}(n,p)) $.
\item  If $ n^{-1/(s-1)}\ll p\leqslant n^{-1/(2s+1)} $, then $\mathrm{ wsat}(\mathbbmsl{G}(n,p), F)\leqslant (s-2)n+p^{-(2s-3)}\log^{\lambda}n $.
\item  If $ p\geqslant n^{-1/(2s+1)}\log n $, then $\mathrm{ wsat}(\mathbbmsl{G}(n,p), F)\leqslant (\delta(F)-1)n+O(1) $.
\item  If $p $ is constant, then	$ \mathrm{ wsat}( \mathbbmsl{G}(n,p),F)= \mathrm{ wsat}(n,F)(1+o(1))$.
\end{itemize}
\end{remark}

\end{document}